\newtheorem{Thm}{Theorem}[section]
\newtheorem{Lem}[Thm]{Lemma}
\newtheorem{Pro}[Thm]{Proposition}
\newtheorem{Cor}[Thm]{Corollary}
\newtheorem*{Con*}{Conjecture}
\theoremstyle{definition}
\theoremstyle{remark}
\newcommand{\la}{\lambda}
\renewcommand{\phi}{\varphi}
\newcommand{\Ric}{\operatorname{Ric}}
\newcommand{\tr}{\operatorname{tr}}
\renewcommand{\cos}{\operatorname{cos}}
\begin{document}

\title[]
{}

\title
[Asymptotically harmonic manifolds of dimension $3$]
{Asymptotically harmonic manifolds of dimension \boldmath$3$\unboldmath \; with minimal horospheres}

\author{Jihun Kim\and JeongHyeong Park\and Hemangi  Madhusudan Shah}
\address{Department of Mathematics, Sungkyunkwan University, Suwon, 16419, Korea}
\email{jihunkim@skku.edu, parkj@skku.edu}
\address{Harish Chandra Research Institute,  A CI of Homi Bhabha National Institute,
 Chhatnag Road, Jhusi, Allahabad 211019, India}
\email{hemangimshah@hri.res.in}

\subjclass[2020]{Primary 53C35; Secondary 53C25}
\keywords{Asymptotically harmonic manifold, Busemann function, mean curvature, horospheres}

 \begin{abstract}
In \cite{S2}, it was shown that,  if $M$ is a $3$-dimensional asymptotically harmonic with minimal horospheres, then $M$ is flat.  However, there is a gap in the proof of this paper.  In this paper, we provide the correct proof of the result.
Thus we complete the classification
of asymptotically harmonic manifolds of dimension $3$:
An asymptotically harmonic manifold of dimension $3$ is either a flat or real hyperbolic space.
\end{abstract}

\dedicatory{Dedicated to the memory of Professor Joseph A. Wolf who sadly passed away on August 14, 2023.}

\maketitle


\section{Introduction}

 Let $(M,g)$ be a complete, simply connected Riemannian
manifold without conjugate points.  We denote the unit tangent
bundle of $M$ by $SM$. For $v \in SM$, let $\gamma_{v}$ be the geodesic
with ${\gamma'_{v}}(0) = v$ and $b_{v} (x) =\displaystyle
\lim_{t\to \infty} (d (x,\gamma_v(t)) - t)$, the corresponding
\emph{Busemann function} for $\gamma_{v}$. The level sets
of the Busemann function are called \emph{horospheres} of $M$.

 A complete, simply connected Riemannian manifold without conjugate points is called \emph{asymptotically harmonic} if the mean curvature of its horospheres is a universal constant, that is, if
its Busemann functions satisfy $\Delta b_v \equiv h,\; \forall v
\in SM$, where $h$ is a nonnegative constant. Then $b_v$ is a
smooth function on $M$ for all
 $v$ and all the  horospheres of $M$ are smooth, simply connected
hypersurfaces in $M$ with constant mean curvature $h$.

On the other hand, a Riemannian manifold is called (locally) \emph{harmonic}, if about any point all the geodesic spheres of sufficiently small radii are of constant mean curvature. Since a harmonic manifold is Einstein, harmonic manifolds of dimensions $2$ and $3$ are of constant sectional curvature.  In 1944, Lichnerowicz \cite{Li} showed that a $4$-dimensional harmonic manifold is locally symmetric and conjectured that a harmonic manifold is flat or {locally} rank-one symmetric space (this conjecture is called \emph{Lichnerowicz's conjecture}). Nikolayevsky \cite{Ni} proved Lichnerowicz's conjecture in dimension $5$. Szab\'o \cite{Sz} proved the conjecture for compact harmonic manifolds.
On the other hand, Damek and Ricci \cite{DR} constructed nonsymmetric harmonic manifolds of dimension $\ge 7$, which are called \emph{Damek-Ricci spaces}.

It follows from \cite{RS1} that every complete, simply connected harmonic
manifold without conjugate points is asymptotically harmonic. It is natural to ask whether asymptotically harmonic manifolds are locally symmetric?  Heber \cite{He} proved that for noncompact, simply connected homogeneous space, the manifold is asymptotically harmonic and Einstein if and only if it is flat, or rank-one symmetric space of noncompact type, or a nonsymmetric Damek-Ricci space. For more characterizations of asymptotically harmonic manifolds, we refer to \cite{KP, Z}. {By the Riccati equation \eqref{Riccati}, one can easily check that a Ricci-flat asymptotically harmonic manifold is also flat.}

In \cite{RS}, it was shown that harmonic manifolds with minimal horospheres are flat.
In \cite{S2},  it was shown that an asymptotically harmonic manifold of dimension $3$ with minimal horospheres is flat.
But we found that there is some gap in the proof of \cite[Lemma 2.2]{S2}: $\tr\sqrt{-R(x,v)v}=0$ {\it does not} imply $R(x,v)v=0$.
In this article,  we give the correct proof of this theorem.

\vspace{0.15in}

Now we recall some notations about asymptotically harmonic manifolds (see \cite{H, S, S2}).

For $v\in SM$ and $x\in v^\perp$, we define $u^{\pm}(v)\in {\rm End}(v^\perp)$ by
\begin{equation*}
    u^+(v)(x)=\nabla_x \nabla b_{-v},\quad u^{-}(v)(x)=-\nabla_x\nabla b_{v}.
\end{equation*}
Also, $u^{\pm}$ satisfies the Riccati equation along the orbits of the geodesic flow $\varphi^t: SM\to SM$. If we let $u^{\pm}(t):=u^{\pm}(\varphi^t v)$ and $R(t):=R\big(~\cdot~, \gamma_v'(t)\big)\gamma_v'(t) \in {\rm End}(\gamma_v'(t)^\perp)$, then $u^{\pm}(t)$ satisfy the following Riccati equation:
\begin{equation}\label{Riccati}
    (u^{\pm})' + (u^{\pm})^2 + R = 0.
\end{equation}
Here, $u^{+}(t)$ and $u^{-}(t)$ are called as \emph{unstable} and \emph{stable} Riccati solution, repsectively.
We have $\tr u^+ (v) =\Delta b_{-v} = h$ and $\tr u^{-}(v) = -\Delta b_v = -h$ for all $v\in SM$. \\

Clearly by the Riccati equation~\eqref{Riccati}, we see that any $2$-dimensional asymptotically harmonic manifold,
is either a flat or a real hyperbolic plane of constant curvature $-h^2$.
This shows that the study of asymptotically harmonic manifold  {begins} with dimension $3$.
 Towards this, it was shown in  \cite{H} that any Hadamard
 asymptotically harmonic manifold of bounded sectional curvature, satisfying  {some} mild hypothesis
 on  {the}  curvature tensor is a real hyperbolic space of constant sectional curvature $-\frac{h^2}{4}$.
 Finally, this result was improved by Schroeder and Shah \cite{S} by relaxing the hypothesis on the curvature tensor.

 \begin{Thm}[\cite{S}]\label{thm:ross}
    If $(M,g)$ is a $3$-dimensional asymptotically harmonic manifold with $h>0$, then $M$ is a real hyperbolic space of constant sectional curvature $-\frac{h^2}{4}$.
\end{Thm}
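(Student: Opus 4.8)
The plan is to reduce the statement to the pointwise identity $u^+(v)=\tfrac{h}{2}\Id$ for every $v\in SM$, and then to read off constant curvature directly from the Riccati equation~\eqref{Riccati}. Since $\dim M=3$, each $u^{\pm}(v)$ is a self-adjoint endomorphism of the $2$-dimensional space $v^\perp$. The first step is to differentiate the trace relation $\tr u^+(t)\equiv h$ along the geodesic flow: as $h$ is a universal constant, $\tr (u^+)'=0$, and substituting this into~\eqref{Riccati} gives $\tr (u^+)^2=-\tr R=-\Ric(\gamma_v'(t),\gamma_v'(t))$. Since this holds at every point of every geodesic, we obtain $\tr u^+(v)^2=-\Ric(v,v)$ for all $v\in SM$, and the identical computation for $u^-$ yields $\tr u^-(v)^2=-\Ric(v,v)$ as well.

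Next I would exploit the two-dimensionality of $v^\perp$. Writing $u^+=\tfrac{h}{2}\Id+P$ with $P$ trace-free and self-adjoint, one has $\tr (u^+)^2=\tfrac{h^2}{2}+\tr P^2$, so that $-\Ric(v,v)=\tfrac{h^2}{2}+\|P\|^2\ge \tfrac{h^2}{2}$, with equality exactly when $P=0$. Moreover, if $P(v)\equiv 0$ for all $v$, then $u^+\equiv\tfrac{h}{2}\Id$ is parallel along every geodesic, so $(u^+)'=0$ and~\eqref{Riccati} forces $R(\,\cdot\,,v)v=-\tfrac{h^2}{4}\Id$ on $v^\perp$ for every $v$; this says that every sectional curvature equals $-\tfrac{h^2}{4}$, so the complete simply connected $M$ is the real hyperbolic space of curvature $-\tfrac{h^2}{4}$. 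Thus the theorem is equivalent to the rigidity assertion $\Ric(v,v)=-\tfrac{h^2}{2}$ for all $v$, that is, $P\equiv 0$.

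To analyze $P$ I would derive its evolution. Using that a trace-free self-adjoint $2\times 2$ matrix satisfies $P^2=\tfrac12\|P\|^2\Id$ by Cayley--Hamilton, the trace-free part of~\eqref{Riccati} collapses to the clean linear equation $P'+hP=-R_0$, where $R_0$ denotes the trace-free part of $R(\,\cdot\,,v)v$; similarly the trace-free part $Q$ of $u^-+\tfrac{h}{2}\Id$ satisfies $Q'-hQ=-R_0$. The trace part merely reproduces $-\Ric(v,v)=\tfrac{h^2}{2}+\|P\|^2=\tfrac{h^2}{2}+\|Q\|^2$, so in particular $\|P(v)\|=\|Q(v)\|$. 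Because the Weyl tensor vanishes in dimension $3$, the matrix $R_0$ is exactly the trace-free part of the Ricci tensor restricted to $v^\perp$, which by the first step is governed by the values of $\|P\|^2$ in the directions of $v^\perp$.

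The hard part is the rigidity: upgrading $\Ric(v,v)\le-\tfrac{h^2}{2}$ to an equality, i.e. proving $P\equiv 0$. The obstruction is precisely that $R_0$ couples the geodesic in the direction $v$ to the transverse directions $v^\perp$, so the single Riccati equation in direction $v$ does not close up and a genuinely global argument is forced. Since $M$ has no conjugate points one has $u^+-u^-\ge 0$ with fixed trace $2h$, which together with the standard a priori bounds on the stable and unstable solutions keeps $P$ and $Q$ bounded on $SM$; for $h>0$ the equations $P'+hP=-R_0$ and $Q'-hQ=-R_0$ then each admit a unique bounded solution along each orbit, represented by $P(v)=-\int_{-\infty}^{0}e^{hs}R_0(\varphi^s v)\,ds$ and $Q(v)=\int_{0}^{\infty}e^{-hs}R_0(\varphi^s v)\,ds$. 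Combining these representations with $\|P(v)\|=\|Q(v)\|$, the time-reversal symmetry $u^+(-v)=-u^-(v)$ (whence $P(-v)=-Q(v)$), and the dimension-$3$ description of $R_0$, I would argue that the nonnegative function $v\mapsto\|P(v)\|^2$ on $SM$ must vanish identically. Once $P\equiv 0$, the reduction of the second paragraph gives constant sectional curvature $-\tfrac{h^2}{4}$. I expect this final asymptotic rigidity step, extracting equality from the behaviour of the bounded Riccati solutions, to be where the real work lies.
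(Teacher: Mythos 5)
The paper does not actually prove Theorem~\ref{thm:ross}; it imports it from \cite{S}, so the comparison is with the known argument there and in \cite{H} rather than with a proof in the present text. Your reductions are correct and coincide with the standard setup: $\tr\big(u^{+}(v)^2\big)=-\Ric(v,v)$, the splitting $u^{+}=\tfrac{h}{2}\Id+P$ with $P$ trace-free, the Cayley--Hamilton collapse $P^2=\tfrac12\|P\|^2\Id$ turning the trace-free part of \eqref{Riccati} into $P'+hP=-R_0$, and the final implication that $P\equiv 0$ forces $R(\cdot,v)v=-\tfrac{h^2}{4}\Id$ and hence constant curvature $-\tfrac{h^2}{4}$. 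All of that is sound bookkeeping.

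The gap is that you never prove $P\equiv 0$, and that is the entire content of the theorem. Two concrete problems with the route you sketch. First, the ``standard a priori bounds'' you invoke to keep $P$ and $Q$ bounded, and hence to justify the representations $P(v)=-\int_{-\infty}^{0}e^{hs}R_0(\varphi^s v)\,ds$, require a lower curvature bound that is not among the hypotheses; without it neither $u^{\pm}$ nor $R_0$ is known to be bounded along orbits, and since $\|R_0\|$ is itself controlled by the values of $\|P\|^2$ in transverse directions the argument threatens to become circular. (What is free is only $u^{+}-u^{-}\ge 0$ with $\tr(u^{+}-u^{-})=2h$, which bounds $P-Q$ by $2h$, not $P$ itself.) Second, even granting boundedness, you offer no mechanism that extracts $\|P\|\equiv 0$ from the integral formulas together with $\|P\|=\|Q\|$ and $P(-v)=-Q(v)$; the phrase ``I would argue that'' is doing all the work. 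The known proofs supply exactly the ingredient your sketch lacks: a topological one. Since $S_pM\cong S^2$ and $TS^2$ is nontrivial, for every $p$ there is some $v_0\in S_pM$ at which $u^{+}(v_0)$ has a repeated eigenvalue, i.e.\ $P(v_0)=0$ and $u^{+}(v_0)=\tfrac{h}{2}\Id$ --- the same device the present paper uses in Lemma~\ref{lem:curvdiag} for $h=0$, citing \cite[p.~848]{H}. The substance of \cite{H} and \cite{S} is then to propagate this pointwise degeneracy, via the Riccati equation and the comparison $u^{+}\ge u^{-}$, to $u^{+}\equiv\tfrac{h}{2}\Id$ on all of $SM$. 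Without that starting point, or a genuine substitute for it, your plan does not close.
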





Our main result is the following.
\begin{Thm}\label{thm:mainthm}
If $(M,g)$ is a $3$-dimensional asymptotically harmonic manifold with minimal horospheres {\rm(}i.e., $h=0${\rm)}, then $M$ is flat.
\end{Thm}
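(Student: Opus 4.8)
The plan is to reduce the whole problem to showing that the common second fundamental form of the horospheres vanishes identically, and the crucial simplification supplied by $h=0$ is that the Busemann functions become harmonic. First I would record that $h=0$ forces $\Delta b_v\equiv 0$ while $|\nabla b_v|\equiv 1$, so every Busemann function is a harmonic function of unit gradient; equivalently, by the trace of the Riccati equation \eqref{Riccati}, $\tr u^{\pm}(v)=\pm h=0$ for all $v\in SM$.

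The second step is to prove that the stable and unstable solutions coincide, $u^+(v)=u^-(v)$ for every $v\in SM$. I would do this in two complementary ways. The clean algebraic way uses the standard comparison between the stable and unstable Riccati solutions on a manifold without conjugate points, namely $u^+(v)\ge u^-(v)$ as symmetric operators; since $\tr\big(u^+(v)-u^-(v)\big)=2h=0$ and $u^+-u^-\ge 0$, a positive semidefinite trace-free operator must vanish, giving $u^+=u^-$. The geometric way, which I will also exploit, is to apply the strong maximum principle to $f:=b_v+b_{-v}$: this function is harmonic (sum of harmonic functions), satisfies $f\ge 0$ by the triangle inequality, and vanishes identically along $\gamma_v$, hence $f\equiv 0$, i.e. $b_{-v}=-b_v$. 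Differentiating twice recovers $u^+=u^-$, and in addition it shows that $M$ is foliated by the bi-asymptotic lines that are the integral curves of $\nabla b_v$, a structure I expect to use later.

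The third step is the dimension reduction. Since $\dim M=3$, the operator $u:=u^+=u^-$ acts on the $2$-plane $v^\perp$ and is symmetric and trace-free, so $u^2=\tfrac12|u|^2\,\Id$. The Riccati equation \eqref{Riccati} then reads $u'+\tfrac12|u|^2\,\Id+R=0$, which on the one hand expresses the curvature operator entirely through $u$, and on the other hand yields $\Ric(v,v)=\tr R=-|u|^2\le 0$ for all $v$. Because the Weyl tensor vanishes in dimension $3$, the full curvature is determined by the Ricci tensor, so the theorem is equivalent to the single assertion $u\equiv 0$ (equivalently $\Ric\equiv 0$). This is precisely the point where the gap in \cite{S2} occurs: from $\tr u=0$ one cannot conclude $u=0$, since a trace-free symmetric $2\times 2$ operator can have eigenvalues $\pm a$ with $a\neq 0$; ruling this out is the real content.

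The main obstacle is therefore the final step, $u\equiv 0$, and I would attack it globally rather than along a single geodesic (along one geodesic the ODE alone admits nonzero bounded solutions, so the global asymptotically harmonic structure is essential). The key observation is that, since $u^+=u^-$, the common solution $u(t)$ along each geodesic is simultaneously the stable and the unstable Riccati solution, hence bounded on all of $\R$ in both time directions; equivalently each horosphere is a complete minimal surface in $M^3$ carrying the positive Jacobi field $\phi\equiv 1$ coming from the equidistant horosphere foliation. My proposed route is to combine a Bochner/Simons identity for the trace-free second fundamental form $u$ of the horospheres with the Gauss equation, and then to integrate over the horosphere -- which is simply connected, so Cohn--Vossen applies -- using the equidistance of the foliation to control the total curvature and force $|u|^2\equiv 0$. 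This then gives $\Ric\equiv 0$, and hence $R\equiv 0$ and flatness. I expect the delicate part to be justifying the integration and the sign in the Simons-type computation, since $\Ric\le 0$ does not give a sectional-curvature sign and the usual stable-minimal-surface rigidity theorems are stated under opposite scalar-curvature hypotheses.
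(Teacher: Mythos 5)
Your steps 1--3 are correct and in fact give a clean reduction: the identity $u^+\equiv u^-$ (either from $V=u^+-u^-\ge 0$ with $\tr V=2h=0$, or from the maximum principle applied to $b_v+b_{-v}$), the Cayley--Hamilton identity $u^2=\tfrac12|u|^2\Id$ for a trace-free symmetric operator on a $2$-plane, and the consequence $\Ric(v,v)=-|u(v)|^2\le 0$ are all valid, and you correctly locate the real content of the theorem in the implication $|u|^2\equiv 0$. But that final step is exactly where your proposal stops being a proof: the Simons-identity-plus-integration plan is only a plan, and the obstructions you yourself flag are genuine. The Simons/Bochner identity for the second fundamental form of a minimal surface in a $3$-manifold carries ambient curvature terms whose sign is not controlled here (you only know $\Ric\le 0$, which is the \emph{wrong} sign for the Fischer-Colbrie--Schoen/Schoen--Yau stable-minimal-surface rigidity you are implicitly invoking); the horospheres are complete noncompact, so integrating the identity requires cutoff functions and decay of $|u|$ along the horosphere that you have not established; and Cohn--Vossen needs integrability of the intrinsic Gauss curvature, which by the Gauss equation $K_N=K_M(e_2,e_3)-\tfrac12|u|^2$ is again something you cannot control without already knowing the conclusion. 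So the argument has a hole precisely at the point that distinguishes this theorem from the flawed proof in \cite{S2}.

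For comparison, the paper closes this gap by an entirely different, essentially local mechanism. It first uses the nontriviality of $TS^2$ to produce, at each point $p$, a direction $e_1$ with $u^+(e_1)=0$, so that $\Ric(e_1,e_1)=0$ is a \emph{maximum} of the Ricci quadratic form (by Lemma \ref{vec}); the first-variation condition at this maximum kills the off-diagonal curvature terms and diagonalizes the curvature operator (Lemma \ref{lem:curvdiag}). Then, working in DeTurck--Yang orthogonal coordinates adapted to $\{e_1,e_2,e_3\}$ so that all Christoffel symbols with three distinct indices vanish, the Riccati equation along $\gamma_{e_1}$ reduces to scalar ODEs for $\Gamma_{21}^2$ and $\Gamma_{31}^3$ whose sum is $\Delta b_{-e_1}=0$; equation \eqref{sum-chri} then forces $\Gamma_{21}^2=\Gamma_{31}^3=0$, hence $K(e_1,e_2)=K(e_1,e_3)=0$ and $K(e_2,e_3)\le 0$, i.e. $K\le 0$ everywhere. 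Once $K\le 0$, comparison geometry gives $u^+(v)\ge 0$, and only \emph{then} does trace-free plus positive semidefinite yield $u^+\equiv 0$ and flatness. If you want to salvage your approach, you would need to supply an independent proof that $K\le 0$ (or that $u^+\ge 0$); without some such sign input, the trace-free operator $u$ with eigenvalues $\pm a$ cannot be excluded, which is precisely the error of \cite{S2} that this paper corrects.
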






From Theorem \ref{thm:ross} and Theorem  \ref{thm:mainthm}, we obtain the complete classification of asymptotically harmonic manifolds of dimension $3$.

\begin{Thm}\label{class}
Let $(M,g)$ be a $3$-dimensional asymptotically harmonic manifold. Then $M$ is flat if $h=0$, and $M$ is a real hyperbolic space of constant sectional curvature $-\frac{h^2}{4}$ if $h>0$.
\end{Thm}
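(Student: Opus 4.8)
The classification in Theorem~\ref{class} splits according to the sign of $h$: for $h>0$ it is exactly Theorem~\ref{thm:ross}, so the whole content is the case $h=0$, i.e.\ Theorem~\ref{thm:mainthm}, and I will concentrate on that. Since $\dim M=3$, the full curvature tensor is algebraically determined by the Ricci tensor (the Weyl tensor vanishes), so $M$ is flat if and only if $\Ric\equiv 0$; my first reduction is therefore to prove $\Ric\equiv 0$. Fix $v\in SM$; as $\dim M=3$ the space $v^\perp$ is $2$-dimensional and the operators $u^{\pm}(v)$ are symmetric (they are Hessians of Busemann functions) and, because $h=0$, trace-free: $\tr u^{\pm}(v)=\pm h=0$. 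Taking the trace in the Riccati equation~\eqref{Riccati} and using that $\tr u^{\pm}$ is constant gives
\begin{equation*}
\tr\big((u^{\pm})^2\big)=-\tr R=-\Ric(\gamma_v',\gamma_v').
\end{equation*}
The left-hand side is $\ge 0$ since $u^{\pm}$ is symmetric, so $\Ric\le 0$ everywhere, with $\Ric(v,v)=0$ precisely when $u^{\pm}(v)=0$. Thus the theorem is equivalent to $u^{\pm}\equiv 0$.

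The second step is to show $u^+=u^-$. For a manifold without conjugate points one has the general inequality $u^+\ge u^-$: indeed $u^+(v)-u^-(v)$ is the restriction to $v^\perp$ of $\nabla\nabla(b_v+b_{-v})$ at a point of $\gamma_v$, and $f:=b_v+b_{-v}\ge 0$ vanishes identically along $\gamma_v$, so its Hessian there is positive semidefinite. Its trace is $\tr u^+-\tr u^-=2h=0$, and a positive semidefinite $2\times 2$ matrix of zero trace is zero; hence $u^+=u^-$ on all of $SM$. I would then exploit $h=0$ a second time, in a cleaner global form: the function $f=b_v+b_{-v}$ is nonnegative, \emph{harmonic} (as $\Delta f=2h=0$), and attains its minimum value $0$ along $\gamma_v$, so by the strong minimum principle $f\equiv 0$, i.e.\ $b_{-v}=-b_v$ on all of $M$. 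Consequently each $b_v$ is a globally defined harmonic function with $|\nabla b_v|\equiv 1$ whose gradient flow lines are geodesics, and whose Hessian has eigenvalues $0,\pm\mu(v)$ with $2\mu(v)^2=-\Ric(v,v)$.

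The remaining and decisive step is to upgrade this to $\mu\equiv 0$, and this is exactly where I expect the real difficulty to lie; it is also the point at which the argument of \cite{S2} breaks down, since the Jacobi operator $R(\cdot,v)v$ is \emph{not} sign-definite (only its trace is $\le 0$), so a vanishing trace cannot be promoted to pointwise vanishing of the curvature. I would attack it along two complementary lines. First, a dynamical line: along each geodesic $U(t):=u^{\pm}(\varphi^t v)$ is a bounded solution of~\eqref{Riccati} on all of $\R$ (bounded forward because it is the stable solution, backward because it is the unstable one, the two now coinciding), and the associated Jacobi tensor $D$ with $D(0)=\Id$, $D'=UD$ satisfies $\det D\equiv 1$ (since $\tr U=0$) and stays in a compact subset of $SL(2,\R)$; the task is to show that such a bounded, unit-determinant evolution with the given Riccati dynamics forces $U\equiv 0$. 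Second, a global line: since $u^+=u^-$ holds in every direction one has strong integrability of the horospherical foliation, and $\int_{S_pM}\Ric(v,v)\,dv$ is a positive multiple of the scalar curvature $s(p)\le 0$; here the aim is to prove $s\equiv 0$ (equivalently $\int\mu^2=0$, whence $\mu\equiv 0$) using the Codazzi and contracted Bianchi identities for the minimal horospheres together with the Riccati evolution. The crux is to convert the pointwise relation $\Ric\le 0$, which is sharp only at flatness, into global vanishing, and closing this gap rigorously is the heart of the corrected proof.
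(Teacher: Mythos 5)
Your reductions are correct as far as they go: the case $h>0$ is indeed just Theorem~\ref{thm:ross}; the inequality $\Ric(v,v)=-\tr\big(u^{+}(v)^2\big)\le 0$ is the paper's Lemma~\ref{vec}; the identity $u^{+}=u^{-}$ (via $V=u^{+}-u^{-}\ge 0$ and $\tr V=2h=0$) reproduces what the paper does in its concluding remarks; and in dimension $3$ flatness is indeed equivalent to Ricci-flatness. But your proof stops exactly where the theorem begins. You say yourself that ``closing this gap rigorously is the heart of the corrected proof'': neither of your two proposed lines is actually carried out, and neither, as formulated, would close it. A trace-free bounded symmetric solution $U$ of the Riccati equation whose Jacobi tensor has constant unit determinant is \emph{not} thereby forced to vanish --- $\tr(U^2)=-\Ric(\gamma',\gamma')$ can be strictly positive wherever the Ricci curvature is strictly negative, which is precisely the configuration that must be excluded --- and the ``global line'' ($s\equiv 0$ from Codazzi/Bianchi) is an aim, not an argument. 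So the decisive implication $\Ric\le 0\Rightarrow\Ric\equiv 0$ is missing; this is the same cliff over which the argument of \cite{S2} fell.

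The paper closes this gap by a pointwise-geometric route you do not touch. Lemma~\ref{lem:curvdiag} first finds at each $p$ a direction $e_1$ with $u^{+}(e_1)=0$ (a topological argument on $S^2$), observes that $\Ric(e_1,e_1)=0$ is then a \emph{maximum} of the Ricci form, and uses the first variation of $t\mapsto\Ric(v_t,v_t)$ to diagonalize the curvature operator in a basis $\{e_1,e_2,e_3\}$ with eigenvalues $-\mu,\mu,-\eta$, where $0\le\mu\le\eta$. Passing to orthogonal coordinates (DeTurck--Yang) adapted to this frame, so that all Christoffel symbols $\Gamma_{ij}^{k}$ with distinct indices vanish, Lemma~\ref{lem:curvature} computes $R(e_2,e_1)e_1$ and $R(e_3,e_1)e_1$ explicitly and combines the resulting identity $0=e_1\big(\Gamma_{21}^{2}+\Gamma_{31}^{3}\big)+\big(\Gamma_{21}^{2}\big)^2+\big(\Gamma_{31}^{3}\big)^2$ with $\Gamma_{21}^{2}+\Gamma_{31}^{3}=\operatorname{div}(e_1)=\Delta b_{-e_1}=0$ to conclude $\Gamma_{21}^{2}=\Gamma_{31}^{3}=0$, hence $\mu=0$ and $K\le 0$. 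Only at that point does the comparison-geometry step you invoke (positive semidefinite $u^{+}$ of zero trace) apply to give $u^{+}\equiv 0$ and, via the Riccati equation~\eqref{Riccati}, flatness. That curvature-sign lemma is the ingredient your proposal lacks, and without it the proof is not complete.
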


\section{Proof of Theorem \ref{thm:mainthm} }


Let $(M,g)$ be a complete, simply connected Riemannian
manifold without conjugate points.
To prove our main result, we need the following {results}.

\begin{Lem}[\cite{Z}]\label{cont}
If $(M, g)$ is an asymptotically harmonic
manifold, then the map $v \mapsto u^{\pm}(v)$ is continuous on $SM$.
\end{Lem}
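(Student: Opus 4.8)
The plan is to realize each operator $u^{\pm}(v)\in{\rm End}(v^\perp)$ as a monotone limit of explicit finite-time solutions of the Riccati equation \eqref{Riccati}, each of which is manifestly continuous in $v$, and then to exploit the constant-trace condition $\tr u^{\pm}\equiv\pm h$ to upgrade the resulting one-sided semicontinuity to genuine continuity. Since the fibre $v^\perp$ varies with $v$, I would first fix the domain by extending $u^{\pm}(v)$ to $\hat u^{\pm}(v)\in{\rm End}(T_{\gamma_v(0)}M)$ with $\hat u^{\pm}(v)\,\gamma_v'(0)=0$, so that continuity of $v\mapsto u^{\pm}(v)$ becomes continuity of a section of the pullback of ${\rm End}(TM)$ over $SM$; locally one trivialises using a parallel frame along $\gamma_v$ depending continuously on $v$, turning everything into matrix-valued ODEs.

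For the approximations, for each $\tau>0$ let $D_\tau(t)$ be the Jacobi tensor along $\gamma_v$ solving $D_\tau''+R(t)D_\tau=0$ with $D_\tau(0)=\Id$ and $D_\tau(\tau)=0$, and set $u_\tau^{-}(v):=D_\tau'(0)$; define $u_\tau^{+}(v)$ analogously from a Jacobi tensor vanishing at $-\tau$. Because $M$ has no conjugate points these boundary-value problems are nondegenerate for every $\tau$, so $u_\tau^{\pm}(v)$ is well defined, satisfies \eqref{Riccati}, and depends continuously on $v$ by continuous dependence of solutions of a linear ODE on its coefficients (the coefficient $R(t)$ varies continuously with $v$ through the geodesic flow) and on the parameter $\tau$. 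The standard construction of the stable and unstable solutions in manifolds without conjugate points then gives that $u_\tau^{\pm}(v)$ converges, monotonically in the Loewner order, to $u^{\pm}(v)$ as $\tau\to\infty$; say $u^{-}(v)=\inf_\tau u_\tau^{-}(v)$. Consequently $u^{-}(v)\le u_\tau^{-}(v)$ for every $\tau$, and, being a decreasing limit of continuous operator-valued functions, $u^{-}$ is upper semicontinuous.

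The key step is to convert this one-sided bound into continuity using asymptotic harmonicity. Fix $v_n\to v$ in $SM$. From $u^{-}(v_n)\le u_1^{-}(v_n)$ and the local boundedness of the continuous function $u_1^{-}$, every eigenvalue of the symmetric operator $u^{-}(v_n)$ is bounded above by a constant $C$ for all large $n$; since $\tr u^{-}(v_n)=-h$ is fixed, each eigenvalue is then bounded below by $-h-(\dim M-2)C$ as well. Hence $\{u^{-}(v_n)\}$ is precompact, and after passing to a subsequence we may assume $u^{-}(v_n)\to L$ for some symmetric $L$. Upper semicontinuity, applied through the fixed continuous bounds $u^{-}(v_n)\le u_\tau^{-}(v_n)$ by letting $n\to\infty$ and then $\tau\to\infty$, yields $L\le u^{-}(v)$, while continuity of the trace gives $\tr L=-h=\tr u^{-}(v)$. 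Thus $u^{-}(v)-L$ is a nonnegative symmetric operator of vanishing trace, forcing $L=u^{-}(v)$; as every subsequential limit equals $u^{-}(v)$, we conclude $u^{-}(v_n)\to u^{-}(v)$. This proves continuity of $u^{-}$, and continuity of $u^{+}$ follows from the relation $u^{+}(v)=-u^{-}(-v)$ together with continuity of $v\mapsto -v$.

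I expect the main obstacle to be the second step: rigorously establishing the monotone convergence $u_\tau^{-}\to u^{-}$, with a definite direction in the Loewner order, purely from the absence of conjugate points, and carrying it out locally uniformly in $v$ through the varying frames of $v^\perp$. By contrast, the final rigidity step is soft, and it is precisely there that the hypothesis of asymptotic harmonicity — the constancy of $\tr u^{\pm}$ — is indispensable; I note that this step is insensitive to the direction of monotonicity, since a one-sided Loewner bound together with a matching trace forces equality in either case.
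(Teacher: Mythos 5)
The paper gives no proof of Lemma \ref{cont} at all---it is quoted from Zimmer \cite{Z}---so your proposal must be measured against the standard argument, which is in fact the one that source uses: approximate $u^{\pm}(v)$ by the finite-horizon Riccati solutions $u^{\pm}_{\tau}(v)$ coming from Jacobi tensors with a prescribed zero (Green's construction), note these are continuous in $v$ because the boundary-value problem is nondegenerate in the absence of conjugate points and solutions of linear ODEs depend continuously on their coefficients, obtain one-sided semicontinuity from the monotone convergence $u^{\pm}_{\tau}\to u^{\pm}$, and then upgrade to continuity by the trace rigidity: a symmetric operator that is nonnegative (or nonpositive) with vanishing trace is zero, the constancy of $\tr u^{\pm}\equiv\pm h$ being precisely what asymptotic harmonicity contributes. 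Your architecture, including the precompactness step (one-sided Loewner bound from a fixed continuous comparison $u^{\pm}_{1}$, the other side from the fixed trace) and the reduction of $u^{+}$ to $u^{-}$ via $u^{+}(v)=-u^{-}(-v)$, is correct and matches the cited proof in all essentials.

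The one concrete error is the direction of monotonicity. With $D_{\tau}(0)=\Id$ and $D_{\tau}(\tau)=0$, Green's lemma gives that $\tau\mapsto D_{\tau}'(0)$ is \emph{increasing} in the Loewner order: in flat space $D_{\tau}'(0)=-\tau^{-1}\Id\uparrow 0$, and in hyperbolic space $D_{\tau}'(0)=-\coth(\tau)\Id\uparrow-\Id$. Hence $u^{-}(v)=\sup_{\tau}u^{-}_{\tau}(v)$ and $u^{-}$ is \emph{lower} semicontinuous; it is $u^{+}$, built from Jacobi tensors vanishing at $-\tau$, that is a decreasing limit and upper semicontinuous. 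As literally written, your inequality $u^{-}(v)\le u^{-}_{\tau}(v)$ fails at every point of flat space. Fortunately your closing remark is exactly right: the endgame is insensitive to the direction. Corrected, the bookkeeping reads $u^{-}(v_{n})\ge u^{-}_{1}(v_{n})\ge -C\,\Id$ locally, so the fixed trace $-h$ caps each eigenvalue above by $-h+(n-2)C$, every subsequential limit $L$ satisfies $L\ge u^{-}(v)$ with $\tr L=\tr u^{-}(v)=-h$, and $L-u^{-}(v)\ge 0$ with zero trace forces $L=u^{-}(v)$. So this is a sign slip, not a structural gap---but it must be fixed. The step you defer to the literature, namely that the monotone Green limit coincides with $-\nabla^{2}b_{v}$ (well defined and smooth here since $\Delta b_{v}\equiv h$ gives $b_{v}\in C^{\infty}$ by elliptic regularity), is indeed classical for manifolds without conjugate points (Green, Eberlein, Eschenburg) and is the same input \cite{Z} relies on, so citing it rather than reproving it is legitimate.
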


\begin{Lem}\label{vec}
    Let $(M,g)$ be an $n$-dimensional asymptotically harmonic manifold. Then we have $\Ric(v,v)\le -\frac{h^2}{n-1}$. In particular, if $M$ has minimal horospheres, then $\Ric(v,v)\le 0$.
\begin{proof}
    From the Riccati equation~\eqref{Riccati}, we get $\Ric(v,v)=-\tr(u^{+}(v)^2)$. For the eigenvalues of $u^{+}(v)$, the arithmetic-quadratic mean inequality yields $h^2 = (\tr u^{+}(v))^2\le (n-1)\tr(u^{+}(v)^2)$, and hence we have $\Ric(v,v)\le -\frac{h^2}{n-1}$. Note that if $M$ has a minimal horosphere (i.e., $h=0$), then $\Ric(v,v)\le 0$.
\end{proof}
\end{Lem}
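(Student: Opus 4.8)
The plan is to reduce the curvature bound to a purely algebraic inequality among the eigenvalues of the unstable Riccati solution $u^{+}(v)$. First I would take the trace of the Riccati equation~\eqref{Riccati} along the geodesic $\gamma_v$, giving $(\tr u^{+})' + \tr\big((u^{+})^2\big) + \tr R = 0$. The defining property of an asymptotically harmonic manifold is that $\tr u^{+}(v) = \Delta b_{-v} = h$ is a constant independent of $v$; hence $(\tr u^{+})' \equiv 0$ along every orbit of the geodesic flow. Since $\tr R = \tr R(\cdot,v)v = \Ric(v,v)$ by the definition of the Ricci tensor, this collapses to the pointwise identity $\Ric(v,v) = -\tr\big((u^{+}(v))^2\big)$.

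The second step is to bound $\tr\big((u^{+})^2\big)$ from below in terms of $h$. The endomorphism $u^{+}(v)$ is self-adjoint, being the Hessian $\nabla\nabla b_{-v}$ of the Busemann function restricted to $v^{\perp}$, so it has $n-1$ real eigenvalues $\lambda_1,\dots,\lambda_{n-1}$. The Cauchy--Schwarz (arithmetic--quadratic mean) inequality then gives $\big(\sum_i \lambda_i\big)^2 \le (n-1)\sum_i \lambda_i^2$, that is, $h^2 = (\tr u^{+})^2 \le (n-1)\,\tr\big((u^{+})^2\big)$. Combining this with the identity from the first step yields $h^2 \le -(n-1)\Ric(v,v)$, which rearranges to $\Ric(v,v) \le -\frac{h^2}{n-1}$. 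Setting $h=0$ then gives the stated special case $\Ric(v,v)\le 0$ for manifolds with minimal horospheres.

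The only points requiring care are the two structural facts fed into the computation: that $u^{+}$ is genuinely self-adjoint, so that its eigenvalues are real and the mean inequality applies rather than failing for a complex spectrum, and that $\tr u^{+}\equiv h$ is a true constant. Both are standard for manifolds without conjugate points — the former because $u^{+}$ is a Hessian, the latter because it is exactly the asymptotic harmonicity hypothesis recorded earlier in the excerpt — so I do not anticipate a genuine obstacle. The entire lemma is a one-line consequence of the trace of~\eqref{Riccati} once these two facts are in hand, and the constancy of $\tr u^{+}$ is precisely what makes the derivative term drop out, which is the one place where the asymptotically harmonic hypothesis is essential rather than cosmetic.
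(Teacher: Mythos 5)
Your proposal is correct and follows essentially the same argument as the paper: trace the Riccati equation~\eqref{Riccati} (using that $\tr u^{+}\equiv h$ is constant so the derivative term vanishes) to get $\Ric(v,v)=-\tr\big(u^{+}(v)^2\big)$, then apply the arithmetic--quadratic mean inequality to the real eigenvalues of the self-adjoint operator $u^{+}(v)$. Your added remarks on the self-adjointness of $u^{+}$ and the role of constancy of $\tr u^{+}$ are sound elaborations of steps the paper leaves implicit, not a different route.
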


\begin{Lem}\label{lem:curvdiag}
    If $(M,g)$ is a $3$-dimensional asymptotically harmonic manifold with minimal horospheres, then for every $p\in M$, there exists an orthonormal basis $\{e_1,e_2,e_3\}$ of $T_p M$ such that the curvature operator
    \begin{equation*}
        \mathcal{R}:\wedge^2 T_pM \to \wedge^2 T_p M \;{\textrm{given by}}\;\ g(R(X\wedge Y),Z\wedge W) = g(R(X,Y)W,Z)
    \end{equation*}
    is diagonal.
    \begin{proof}
    As $\dim M =3$, we can identify $S_p M$ with the standard $2$-sphere ${S}^2$, and $v^\perp$ with $T_v S^2$ for $v\in S_p M = {S}^2$. Eigenvalues of  $u^{+}(v)$ are  $\la(v)$ and $-\la(v)$.  As $TS^2$ is nontrivial,  an easy
  topological argument shows that for every $p\in M$, there exists $v_0 \in S_p M$ such that $u^{+}(v_0 ) =0$ (see \cite[p. 848]{H}).

    Let $v_0 = e_1$. Then $u^{+}(e_1)'(x) = -R(x,e_1)e_1$ for $x\in \{e_1\}^\perp$. Since  $\tr u^{+}(v) = 0$ for all $v \in SM$, it follows that  $\tr u^{+}(v)' =0$. {Consequently, we have that Ricci$(e_1, e_1) = 0$  and
 thus {\it Ricci curvature attains maximum}. }

   Denote the eigenvalues of $u^{+}(e_1)'$ by $\mu$ and $-\mu$. Without loss of generality, we assume that $\mu \ge 0$.
    If we let $\{e_2, e_3\}\in \{e_1\}^\perp$ be the eigenvectors of $u^{+}(e_1)'$, we have $R(e_2,e_1)e_1 = -\mu e_2$ and $R(e_3,e_1)e_1 = \mu e_3$. We also have  $\Ric(e_3,e_3)\le 0$ by Lemma \ref{vec}. Hence $K(e_2,e_3):=g(R(e_2,e_3)e_3,e_2)= - \eta \le -\mu \leq 0$.

    Consider for $t\in (-\varepsilon,\varepsilon)$ the vectors $v_t=\cos te_1 + \sin te_2$. Then we let
    \begin{equation*}
        f(t):=\Ric(v_t,v_t)=K(e_1,e_2)+\sin^2 t K(e_2,e_3) + \cos^2 t K(e_1,e_3) + \sin 2t\, g(R(e_1,e_3)e_3,e_2).
    \end{equation*}
    Since $f(0)=0$ and it is maximal, $f'(0)=0$. This implies $g(R(e_1,e_3)e_3,e_2)=0$. If we replace $e_2$ with $e_3$ in the above computation, we also obtain  $g(R(e_1,e_2)e_2,e_3)=0$. Therefore, for every point $p$, the curvature operator is diagonal and it is given by
    \begin{equation*}
        {\mathcal{R}}(p) =
        \begin{pmatrix}
           - \mu(p) & 0 & 0\\
            0 & \mu(p) & 0\\
            0 & 0 & { -\eta(p)}
        \end{pmatrix}
    \end{equation*}
    in the basis $\{e_1,e_2,e_3\}$ of $T_p M$.
  \end{proof}
\end{Lem}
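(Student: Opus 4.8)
The plan is to single out a distinguished direction $e_1\in S_pM$ at which the unstable Riccati operator vanishes, and then to exploit the fact that the Ricci curvature attains its maximum there. Since $\dim M = 3$, the unit sphere $S_pM$ is a $2$-sphere and each $v^\perp$ is a $2$-plane, which I identify with $T_v S^2$. Because $h=0$, the operator $u^+(v)$ is symmetric and trace-free on $v^\perp$, so its eigenvalues are $\pm\lambda(v)$ with $\lambda(v)\ge 0$. If $\lambda$ were strictly positive everywhere, its two eigendirections would furnish a continuous line field on $S^2$ (using the continuity of $v\mapsto u^+(v)$ from Lemma~\ref{cont}), which is impossible since $\chi(S^2)\neq 0$; this is the topological argument cited from \cite{H}. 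Hence there is $v_0=:e_1$ with $u^+(e_1)=0$.

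Next I would insert $u^+(e_1)=0$ into the Riccati equation \eqref{Riccati} to obtain $(u^+)'(e_1)=-R(\,\cdot\,,e_1)e_1$ on $\{e_1\}^\perp$. Since $\tr u^+\equiv 0$ along the geodesic flow, differentiation yields $\tr(u^+)'(e_1)=0$, that is $\Ric(e_1,e_1)=0$. By Lemma~\ref{vec} the Ricci curvature is everywhere nonpositive, so $e_1$ is a direction at which $v\mapsto\Ric(v,v)$ attains its maximum value $0$. Diagonalizing the trace-free symmetric operator $(u^+)'(e_1)$ produces an orthonormal eigenbasis $\{e_2,e_3\}$ of $\{e_1\}^\perp$ with eigenvalues $\mu$ and $-\mu$ (say $\mu\ge 0$), which translates into $K(e_1,e_2)=-\mu$ and $K(e_1,e_3)=\mu$; applying Lemma~\ref{vec} to $e_3$ then forces $K(e_2,e_3)=:-\eta\le-\mu\le 0$.

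The crux is to annihilate the off-diagonal entries of $\mathcal{R}$ by a first-variation argument anchored at this maximum. I would set $v_t=\cos t\,e_1+\sin t\,e_2$ and $f(t)=\Ric(v_t,v_t)$, expand $f$ in terms of the sectional curvatures $K(e_1,e_2),K(e_1,e_3),K(e_2,e_3)$ and the single mixed term $g(R(e_1,e_3)e_3,e_2)$, and use that $f$ has an interior maximum at $t=0$ to conclude $f'(0)=0$, which isolates $g(R(e_1,e_3)e_3,e_2)=0$. Repeating with the roles of $e_2$ and $e_3$ interchanged gives $g(R(e_1,e_2)e_2,e_3)=0$. Finally I would translate these vanishings, together with $R(e_3,e_1)e_1=\mu e_3$ and the standard (anti)symmetry and pair-symmetry relations of $R$, into the statement that every off-diagonal matrix entry $\langle\mathcal{R}(e_i\wedge e_j),e_k\wedge e_l\rangle$ vanishes, leaving the diagonal operator with entries $-\mu,\mu,-\eta$.

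I expect the genuine obstacle to be the topological step guaranteeing a zero of $u^+$ (which is precisely where continuity, Lemma~\ref{cont}, is needed); the variational identity $f'(0)=0$ and the subsequent bookkeeping with curvature symmetries are then routine. One subtlety to handle carefully is that the maximum of $f$ at $t=0$ must be \emph{interior}, so that $f'(0)=0$ is legitimate — this is exactly why the one-parameter family $v_t$ is taken through the maximizing direction $e_1$.
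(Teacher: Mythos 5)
Your proposal follows the paper's proof essentially step for step: the hairy-ball-type argument (via continuity of $v\mapsto u^{+}(v)$) producing $e_1$ with $u^{+}(e_1)=0$, the observation that $\Ric(e_1,e_1)=0$ is an interior maximum of the nonpositive Ricci curvature, the diagonalization of $(u^{+})'(e_1)=-R(\,\cdot\,,e_1)e_1$ to get $e_2,e_3$, and the first-variation computation $f'(0)=0$ killing the remaining off-diagonal curvature components. The argument is correct and matches the paper's; your added remarks on the line-field obstruction and on checking all off-diagonal entries of $\mathcal{R}$ via the curvature symmetries only make explicit what the paper leaves implicit.
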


\noindent
{{\bf Remark:} In the above lemma,  from the construction of $\mu$ and $\eta$ (using $\tr u^{+}(v)'=0$ and $\Ric(e_3,e_3)\le 0$), we note that the eigenvalues $\mu$
 and $\eta$ can be interchanged, but the aforementioned form of the curvature operator is the same.}


{
 \begin{Pro}
Let $(M^3, g)$ be an asymptotically harmonic manifold with minimal horospheres. Then
around  any point $p \in M$,  we can find an  orthogonal coordinate system $(U, (x^1, x^2, x^3))$
in which the metric is diagonal, that is $g=\sum\limits_{i=1}^{3}a_i(dx^{i})^2$ for some positive function $a_i$.
And the orthonormal basis  $\{e_1,e_2,e_3\}$ of $T_{p}M$ obtained in Lemma~\ref{lem:curvdiag}
with  the associated orthonormal frame with  $e_i = \dfrac{1}{\sqrt{a_i}}\dfrac{\partial}{\partial x^i} (p)$ for $i=1,2,3$.
\end{Pro}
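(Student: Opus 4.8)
The plan is to decompose the statement into two parts: the existence of local orthogonal coordinates on the smooth Riemannian $3$-manifold $(M,g)$, and the alignment of the resulting coordinate frame at $p$ with the basis $\{e_1,e_2,e_3\}$ supplied by Lemma~\ref{lem:curvdiag}. Only the single frame at $p$ enters the statement, so no continuity of the curvature eigendecomposition on a neighborhood is required, and repeated values among $\mu,\eta$ are harmless.

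For existence I would invoke the theorem of DeTurck and Yang on triply orthogonal systems: every point of a smooth Riemannian $3$-manifold has a neighborhood admitting coordinates $(x^1,x^2,x^3)$ with $g(\partial_i,\partial_j)=0$ for $i\ne j$. In gradient form one is solving $\langle\nabla x^i,\nabla x^j\rangle=0$ for $i\ne j$, which is a determined system in dimension $3$ ($3$ unknown functions against $\binom{3}{2}=3$ orthogonality conditions), and local solvability is exactly their result. Setting $a_i:=g(\partial_i,\partial_i)>0$ then yields $g=\sum_{i=1}^{3}a_i\,(dx^i)^2$, the asserted diagonal form, with orthonormal frame $\{\tfrac{1}{\sqrt{a_i}}\partial_i\}$.

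For the alignment, the key observation is that in any orthogonal system $\nabla x^i=a_i^{-1}\partial_i$, so prescribing the coordinate directions at $p$ is the same as prescribing the three mutually orthogonal hypersurface normals $\nabla x^i(p)$, equivalently the tangent planes $e_i^\perp$ of the coordinate surfaces through $p$. This is a first-order, one-point datum, and I would feed it into the construction as initial data: in the real-analytic category this is transparent via the Cauchy--Kovalevskaya theorem applied to the orthogonality system with a non-characteristic initial surface tangent to $\mathrm{span}\{e_1,e_2\}$ at $p$, and more generally it lies within the freedom of the DeTurck--Yang existence argument, which solves a determined system admitting prescribed data at the central point. After normalizing one obtains $e_i=\tfrac{1}{\sqrt{a_i}}\partial_i(p)$, as required.

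The main obstacle is twofold. First, the existence of orthogonal coordinates is a genuinely three-dimensional phenomenon: for $n\ge 4$ the $\binom{n}{2}$ orthogonality conditions outnumber the $n$ coordinate functions and the local result fails, so the content rests on the determined case. Second, the alignment must be engineered inside the construction and cannot be repaired afterward, since a pointwise rotation carrying an arbitrary orthogonal frame at $p$ onto $\{e_i\}$ would destroy orthogonality of the coordinates off $p$; for the same reason the tempting shortcuts---taking $x^3$ to be the distance to the geodesic surface $\exp_p(\mathrm{span}\{e_1,e_2\})$ (which forces $x^1,x^2$ to be constant along the normal geodesics and hence fails to keep the induced metric diagonal on nearby leaves), or promoting the curvature eigenvectors to eigenvector fields (whose orthogonal $2$-plane distributions need not be integrable)---do not yield orthogonal coordinates. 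The substance of the proposition is precisely that the DeTurck--Yang construction circumvents these failures while respecting the prescribed frame at $p$.
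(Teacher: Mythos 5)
Your proposal follows essentially the same route as the paper: the entire content of the paper's proof is the citation of Theorem 4.2 of DeTurck--Yang, exactly as in your first step. The only difference is that you explicitly isolate and argue the frame-alignment issue (matching $\tfrac{1}{\sqrt{a_i}}\partial_i(p)$ with the $e_i$ of Lemma~\ref{lem:curvdiag}), which the paper's one-line proof treats as implicitly contained in the DeTurck--Yang construction; your observations that a pointwise rotation, a distance-function coordinate, or the curvature eigenvector distributions would not do the job are correct and a useful supplement rather than a deviation.
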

\begin{proof}
The existence of a  required orthogonal coordinate system along with {the} associated orthonormal frame  $\{e_1,e_2,e_3\}$ of $T_{p}M$
is guaranteed  by Theorem $4.2$ of
\cite{DY}, which says that {\it every $3$-dimensional Riemannian manifold has orthogonal coordinates, that is, coordinates in which the Riemannian metric has a diagonal form}.
\end{proof}}


\begin{Cor}\label{cor}
 Let  $\{e_1, e_2, e_3\}$ be an orthonormal basis of $T_p M$ obtained from Lemma~\ref{lem:curvdiag}.
 Then the  Christoffel symbols $\Gamma_{ij}^k=0$ for all distinct indices $i,j,k$.
  \end{Cor}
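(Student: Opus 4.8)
The plan is to observe that the whole content of the Corollary already resides in the Proposition: once we know the metric is \emph{diagonal} in the orthogonal coordinate system $(U,(x^1,x^2,x^3))$ around $p$, the vanishing of the Christoffel symbols with three distinct indices is a purely formal computation valid for \emph{any} diagonal metric, requiring no further appeal to asymptotic harmonicity or to Lemma~\ref{lem:curvdiag} beyond the diagonalization already obtained.

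First I would recall the Koszul formula for the Levi-Civita connection in the coordinate frame $\{\partial/\partial x^i\}$,
\[
\Gamma_{ij}^k = \frac{1}{2}\sum_{l=1}^{3} g^{kl}\left(\frac{\partial g_{jl}}{\partial x^i} + \frac{\partial g_{il}}{\partial x^j} - \frac{\partial g_{ij}}{\partial x^l}\right).
\]
By the Proposition the metric is $g=\sum_i a_i(dx^i)^2$, so $g_{ij}=a_i\delta_{ij}$ and $g^{ij}=a_i^{-1}\delta_{ij}$. Hence in the sum over $l$ only the term $l=k$ survives, and
\[
\Gamma_{ij}^k = \frac{1}{2a_k}\left(\frac{\partial g_{jk}}{\partial x^i} + \frac{\partial g_{ik}}{\partial x^j} - \frac{\partial g_{ij}}{\partial x^k}\right).
\]
Now fix distinct $i,j,k$. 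Then $g_{jk}$, $g_{ik}$ and $g_{ij}$ are off-diagonal entries of the metric and therefore vanish identically on $U$; consequently each of the three partial derivatives above is zero, giving $\Gamma_{ij}^k=0$ as claimed.

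Finally I would remark that the statement reads the same if one interprets $\Gamma_{ij}^k$ as the connection coefficients of the orthonormal frame $e_i=a_i^{-1/2}\,\partial/\partial x^i$ furnished by the Proposition rather than of the coordinate frame. Indeed, expanding $\nabla_{e_i}e_j=a_i^{-1/2}\big[(\partial_i a_j^{-1/2})\partial_j + a_j^{-1/2}\nabla_{\partial_i}\partial_j\big]$, the product-rule term lies along $e_j$, so the only possible $e_k$-component for distinct $k$ is $(\sqrt{a_k}/\sqrt{a_i a_j})\,\Gamma_{ij}^k$, which is again $0$. I do not expect a genuine obstacle here: the corollary is essentially immediate from the diagonal form of the metric. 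The only points requiring care are to keep track of which frame the symbols refer to and to use that the off-diagonal components vanish identically on $U$ (not merely at the point $p$), so that their derivatives vanish as well.
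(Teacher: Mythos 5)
Your proof is correct and follows essentially the same route as the paper: both arguments reduce the claim to the purely formal observation that the metric is diagonal in the orthogonal coordinates supplied by the Proposition. The only cosmetic difference is that the paper applies the Koszul formula directly to the orthonormal frame (where the Lie brackets $[e_i,e_j]$ lie in the span of $e_i,e_j$ and hence have no $e_k$-component), whereas you compute the coordinate Christoffel symbols first and then convert to the orthonormal frame; your care in distinguishing the two frames is a welcome clarification of a point the paper leaves implicit.
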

  \begin{proof}
    By the Koszul formula, the Christoffel symbol $\Gamma_{ij}^{k}$ for $\{e_1, e_2, e_3\}$ is given by
    \begin{equation*}
        \Gamma_{ij}^{k} = \frac{1}{2}\big(g([e_i,e_j],e_k)-g([e_i,e_k],e_j)-g([e_j,e_k],e_i)\big).
    \end{equation*}
    Since $e_i = \dfrac{1}{\sqrt{a_i}}\dfrac{\partial}{\partial x^i}$ for orthogonal coordinates $(x^i)$, we have that $\Gamma_{ij}^{k}=0$ for all distinct indices $i,j,k$.
  \end{proof}


\begin{Lem}\label{lem:curvature}
 If $(M,g)$ is a $3$-dimensional asymptotically harmonic manifold with minimal horospheres, then the sectional curvature $K$ of $M$ satisfies $K \le 0$.
\end{Lem}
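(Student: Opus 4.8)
The plan is to reduce the inequality $K\le 0$ to the single scalar statement $\mu\equiv 0$, where $\mu$ is the nonnegative eigenvalue produced in Lemma~\ref{lem:curvdiag}, and then to force the vanishing of $\mu$ from a splitting dictated by the maximality of the Ricci curvature in the distinguished direction $e_1$.

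First I would observe that in dimension three the sectional curvatures at a point $p$ are exactly the numerical range of the curvature operator. Every element of $\wedge^2 T_pM$ is decomposable, so every unit bivector has the form $X\wedge Y$ with $\{X,Y\}$ orthonormal; hence the set of sectional curvatures at $p$ equals $\{g(\mathcal{R}(p)\omega,\omega):|\omega|=1\}=[\lambda_{\min},\lambda_{\max}]$, the interval between the extreme eigenvalues of the symmetric operator $\mathcal{R}(p)$. By Lemma~\ref{lem:curvdiag} these eigenvalues are $-\eta\le-\mu\le\mu$ with $\eta\ge\mu\ge 0$, so the maximal sectional curvature at $p$ is precisely $\mu(p)$. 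Therefore $K\le 0$ on $M$ is equivalent to $\mu\equiv 0$, i.e.\ to $R(\,\cdot\,,e_1)e_1=0$, and it suffices to prove this in the distinguished direction $e_1$ where $u^+$ vanishes. (Note $\eta\ge 0$ is already known, so $\mu=0$ will force the remaining eigenvalue $-\eta$ to be $\le 0$ automatically.)

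Next I would use that $e_1$ realizes the maximum of the Ricci curvature. Since $u^+(e_1)=0$ we have $\Ric(e_1,e_1)=-\tr(u^+(e_1)^2)=0$, and, writing $b:=b_{e_1}$, the Bochner formula combined with $|\nabla b|\equiv 1$ and $\Delta b\equiv 0$ gives $|\nabla^2 b|^2=-\Ric(\nabla b,\nabla b)$; evaluated at $p$, where $\nabla b(p)=-e_1$, this yields $\nabla^2 b(p)=0$, equivalently $u^-(e_1)|_p=0$. Thus both Busemann Hessians vanish at $p$. I then consider $f:=b_{e_1}+b_{-e_1}$, which is harmonic ($\Delta f=2h=0$), nonnegative by the triangle inequality for Busemann functions, and satisfies $f(p)=0$. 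By the strong minimum principle on the connected manifold $M$ we get $f\equiv 0$, so $b_{-e_1}=-b_{e_1}$; hence the integral curves of $\nabla b$ are bi-asymptotic minimizing lines and $b$ is affine along each of them.

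The hard part will be to upgrade this affine/line structure to the pointwise curvature statement. Concretely, I would argue that an affine Busemann function on a manifold without conjugate points forces $\nabla^2 b\equiv 0$, a metric splitting $M\cong\R\times N$ with $\nabla b$ parallel; once $\nabla^2 b\equiv 0$ along $\gamma_{e_1}$, the Riccati equation~\eqref{Riccati} with $u^+\equiv 0$ there gives $R(\,\cdot\,,\gamma_{e_1}')\gamma_{e_1}'\equiv 0$, in particular $R(\,\cdot\,,e_1)e_1=0$ and $\mu(p)=0$. I expect this upgrade to be the main obstacle: the single-geodesic Riccati analysis is by itself insufficient, because the coincidence $u^+=u^-$ of the stable and unstable solutions along $\gamma_{e_1}$ (all that the ODE along one geodesic detects) does not by itself force the curvature to vanish; one must invoke the global no-conjugate-point geometry together with asymptotic harmonicity to obtain the splitting. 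It is reassuring that the argument delivers only $\mu=0$ and leaves $\eta$, the Gauss curvature $-\eta\le 0$ of the factor $N$, unconstrained, consistently with $K\le 0$ being strictly weaker than the flatness asserted in Theorem~\ref{thm:mainthm}.
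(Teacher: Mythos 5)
Your reduction of the lemma to the single identity $\mu(p)=0$ is correct (in dimension $3$ every bivector is decomposable, so the sectional curvatures at $p$ fill out $[-\eta,\mu]$), and the intermediate facts you establish are all true: $\Ric(e_1,e_1)=0$, $\nabla^2 b_{e_1}(p)=0$ via Bochner, and $b_{e_1}+b_{-e_1}\equiv 0$ via the strong minimum principle. But the step you yourself flag as the main obstacle is a genuine gap, and it is exactly the hard point of the lemma. The identity $b_{-e_1}=-b_{e_1}$ gives only $\nabla^2 b_{e_1}=-\nabla^2 b_{-e_1}$, i.e.\ $u^{+}=u^{-}$ along the gradient lines of $b_{e_1}$ --- and by Lemma~\ref{lem:nonnegativeV} together with $\tr V=2h=0$ one already knows $u^{+}\equiv u^{-}$ on all of $SM$, so the minimum-principle step adds nothing new. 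It does not make $b_{e_1}$ an affine function: for the splitting theorem you want to invoke, $b$ must be affine along \emph{every} geodesic, which is equivalent to $\nabla^2 b\equiv 0$ --- precisely what you are trying to prove, so the appeal is circular. Splitting or flat-strip theorems from the existence of a line are not available for manifolds that are merely without conjugate points; they require nonpositive curvature, no focal points, or $\Ric\ge 0$ (compare Theorem~\ref{thm:asymEin} and the propositions of Section~3, each of which needs such an extra hypothesis to pass from $u^{+}=u^{-}$ to flatness). Note also that the conclusion you need, $R(\,\cdot\,,e_1)e_1=0$ at the point where $u^{+}(e_1)=0$, is exactly the assertion whose faulty derivation in \cite{S2} this paper was written to repair, so it cannot be taken for granted.

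The paper closes this gap by an entirely local computation: it places the frame of Lemma~\ref{lem:curvdiag} into orthogonal coordinates (DeTurck--Yang), so that all Christoffel symbols $\Gamma_{ij}^k$ with distinct indices vanish; computing $R(e_2,e_1)e_1=-\mu e_2$ and $R(e_3,e_1)e_1=\mu e_3$ expresses $\mp\mu$ through $e_1\Gamma_{21}^2+(\Gamma_{21}^2)^2$ and $e_1\Gamma_{31}^3+(\Gamma_{31}^3)^2$; adding the two equations and using $\Gamma_{21}^2+\Gamma_{31}^3=\operatorname{div}(\nabla b_{-e_1})=\Delta b_{-e_1}=0$ kills the derivative term and forces $(\Gamma_{21}^2)^2+(\Gamma_{31}^3)^2=0$, hence $\mu=0$ and $K(e_1,e_2)=K(e_1,e_3)=0$, $K(e_2,e_3)\le 0$. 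If you wish to keep your global setup you must supply an argument of comparable strength for the step $\mu(p)=0$; as written, the proposal does not prove the lemma.
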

\begin{proof}
Let  $\{e_1,e_2,e_3\}$  be as in the  Lemma \ref{lem:curvdiag}. At fixed point $p\in M$,  we can consider a geodesic $\gamma_{e_1}$ with $\gamma_{e_1}(0)=p$, $\gamma_{e_1}'(0)=e_1$ and $b_{e_1}(x)=\displaystyle\lim_{t\to \infty}(d(x,\gamma_{e_1}(t))-t)$. Since $\nabla_{e_1}e_1 = 0$, we get Christoffel symbols $\Gamma_{11}^{1}=\Gamma_{11}^{2}=\Gamma_{11}^{3}=0$. By {Corollary} \ref{cor}, $\Gamma_{ij}^k =0$ for all distinct indices $i,j,k$. So, we need to compute  {the} other Christoffel symbols when at least two indices are the same. From $g(\nabla_{e_i}e_j , e_k) = -g(e_j, \nabla_{e_i} e_k)$ for all $i,j,k$, we get $\Gamma_{ij}^k = -\Gamma_{ik}^j$.

\smallskip
\noindent\textsc{Case 1}: All indices are the same.  If $1\le i=j=k \le 3$, we have $\Gamma_{11}^1 = \Gamma_{22}^2 = \Gamma_{33}^3=0$.

\smallskip
\noindent\textsc{Case 2}: Two indices are the same. From $g(\nabla_{e_i}e_j , e_j)=0$, we obtain $\Gamma_{ij}^j=0$.

Therefore, all the Christoffel symbols are zero except $\Gamma_{21}^2=-\Gamma_{22}^1$, $\Gamma_{23}^2=-\Gamma_{22}^3$, $\Gamma_{31}^3=-\Gamma_{33}^1$, $\Gamma_{32}^3=-\Gamma_{33}^2$.

Now we compute the curvature tensor $R(e_2,e_1)e_1$. Since $\nabla_{e_2}e_1 = \Gamma_{21}^2 e_2$ and $[e_2,e_1] = \nabla_{e_2}e_1 - \nabla_{e_1}e_2 = \Gamma_{21}^2 e_2$, we obtain
\begin{align*}
    -\mu e_2 &= R(e_2,e_1)e_1\\
    &=\nabla_{e_2}\nabla_{e_1}e_1 - \nabla_{e_1}\nabla_{e_2}e_1 - \nabla_{[e_2,e_1]}e_1\\
    &=-\left(e_1 \Gamma_{21}^2 +\left(\Gamma_{21}^2\right)^2\right)e_2.
\end{align*}
Similarly, we also obtain $\mu e_3  =-\left(e_1 \Gamma_{31}^3 +\left(\Gamma_{31}^3\right)^2 \right)e_3$,
as  $\mu e_3 = R(e_3 ,e_1)e_1$.
Hence,  we have
\begin{equation*}
    -\mu = -\left(e_1 \Gamma_{21}^2 +\left(\Gamma_{21}^2\right)^2\right),\quad \mu=-\left(e_1 \Gamma_{31}^3 +\left(\Gamma_{31}^3\right)^2 \right),
\end{equation*}
and hence consequently,

\begin{equation}\label{sum-chri}
    0 = e_1 \left(\Gamma_{21}^2  +  \Gamma_{31}^3\right) +\left(\Gamma_{21}^2\right)^2+\left(\Gamma_{31}^3\right)^2.
\end{equation}
We observe that
\begin{align*}
    \Gamma_{21}^2  +  \Gamma_{31}^3 &= \Gamma_{11}^1 + \Gamma_{21}^2  +  \Gamma_{31}^3 \\
    &=g(\nabla_{e_1}e_1, e_1)+g(\nabla_{e_2}e_1, e_2)+g(\nabla_{e_3}e_1, e_3)\\
    &=\operatorname{div}(e_1)(p) = \operatorname{div}(\nabla b_{-e_1})(p) = \Delta b_{-e_1}(p) = 0.
\end{align*}
 From  (\ref{sum-chri})  this yields that  $\Gamma_{21}^2  = \Gamma_{31}^3=0$, so $K(e_1 ,e_2)=K(e_1, e_3)=0$ and $K(e_2, e_3) \le 0$. This implies that the sectional curvature $K$ of $M$ satisfies $K\le 0$.
\end{proof}

Now finally, we  prove our main result.

\medskip
\noindent{\textit{Proof of Theorem \ref{thm:mainthm}}}. From Lemma \ref{lem:curvature}, the sectional curvature $K\le 0$. Using standard comparison geometry, we obtain two eigenvalues $\la_1 (v)$ and $\la_2 (v)$ of the operator $u^{+}(v)$ satisfy $\la_1(v), \la_2 (v)\ge 0$ for all $v\in SM$. But $\la_1(v)+\la_2 (v)=\tr u^{+}(v) = h = 0$, we have $\la_1(v)=\la_2 (v)=0$. Therefore, $u^{+}(v)\equiv 0$  and so $R(x,v)v = 0$ for all $x\in v^\perp$ by  Riccati equation  (\ref{Riccati}).
This completes the proof.\hfill \qed

\section{Concluding Remarks}
Let $(M,g)$ be a complete, simply connected Riemannian
manifold without conjugate points. In this section, the dimension of $M$ is arbitrary.
We define $V(v):=u^{+}(v)-u^{-}(v)$ and $X(v):=-\frac{1}{2}(u^{+}(v)+u^{-}(v))$, correspondingly $V(t):=V(\varphi^t v)$ and $X(t):=X(\varphi^t v)$, respectively, where $\varphi^t :SM \to SM$ is the geodesic flow. Then the Riccati equation~\eqref{Riccati} gives  $V' = XV+VX$.

If we assume that $M$ is an asymptotically harmonic manifold with minimal horospheres and Einstein, then we have that $M$ is flat as follows:

\begin{Lem}[\cite{Z}]\label{lem:nonnegativeV}
    Let $(M,g)$ be an asymptotically harmonic manifold. Then $V(v)\ge 0$ for all $v \in SM$.
\end{Lem}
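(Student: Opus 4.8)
The plan is to prove the pointwise operator inequality $u^+(v)\ge u^-(v)$ on $v^\perp$, which is exactly the assertion $V(v)\ge 0$; since $v\in SM$ is arbitrary it suffices to fix one geodesic $\gamma_v$, work at its basepoint $t=0$, and show $\langle u^+(v)x,x\rangle\ge \langle u^-(v)x,x\rangle$ for every $x\in v^\perp$. I would deliberately \emph{not} argue through the evolution equation $V'=XV+VX$: that identity only yields the congruence $V(t)=\Phi(t)V(0)\Phi(t)^{T}$ with $\Phi'=X\Phi$, $\Phi(0)=\Id$, and hence merely shows that the signature of $V$ is constant along $\gamma_v$. The actual positivity has to be injected at one point, and the natural source is the index form together with the absence of conjugate points.

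The first step is to realize $u^{\pm}(v)$ as limits of Dirichlet solutions of the Jacobi equation $A''+R(t)A=0$. For $T,S>0$ let $A_{-T}$ be the Jacobi tensor with $A_{-T}(-T)=0$ and $A_{-T}(0)=\Id$, and let $A_S$ be the Jacobi tensor with $A_S(0)=\Id$ and $A_S(S)=0$; these are invertible on $[-T,0)$ and $(0,S]$ respectively, because $M$ has no conjugate points. Writing $\tilde U_T:=A_{-T}'(0)$ and $U_S:=A_S'(0)$ for the associated Riccati data at $t=0$, the standard theory of manifolds without conjugate points (Green, Eschenburg) gives $\tilde U_T\to u^{+}(v)$ and $U_S\to u^{-}(v)$ as $T,S\to\infty$. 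This convergence, and the matching of the future/past vanishing data with the sign conventions $u^+(v)x=\nabla_x\nabla b_{-v}$ and $u^-(v)x=-\nabla_x\nabla b_v$, is the step I expect to demand the most care; I would cite it as known rather than reprove it.

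The second step converts the two quadratic forms into index forms and glues them. Fix a unit $x\in v^\perp$, let $J_1(t)=A_{-T}(t)x$ on $[-T,0]$ and $J_2(t)=A_S(t)x$ on $[0,S]$, and recall $I_{[a,b]}(Y,Y)=\int_a^b\big(\|Y'\|^2-\langle R(t)Y,Y\rangle\big)\,dt$. Integrating $\|J'\|^2$ by parts against $J''=-R(t)J$ gives $I_{[-T,0]}(J_1,J_1)=\langle J_1'(0),x\rangle=\langle \tilde U_T x,x\rangle$ and $I_{[0,S]}(J_2,J_2)=-\langle J_2'(0),x\rangle=-\langle U_S x,x\rangle$. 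Now define the piecewise Jacobi field $Z$ on $[-T,S]$ by $Z=J_1$ on $[-T,0]$ and $Z=J_2$ on $[0,S]$; it is continuous (both pieces equal $x$ at $0$), vanishes at both endpoints, and is nonzero since $Z(0)=x$. Summing the two boundary contributions (the endpoint terms at $-T$ and $S$ vanish, and the interior corner at $0$ contributes precisely $\langle \tilde U_T x,x\rangle-\langle U_S x,x\rangle$) yields $I_{[-T,S]}(Z,Z)=\langle \tilde U_T x,x\rangle-\langle U_S x,x\rangle$. Because $M$ has no conjugate points, the index form is positive definite on piecewise smooth fields vanishing at both endpoints, so $I_{[-T,S]}(Z,Z)\ge 0$, whence $\langle \tilde U_T x,x\rangle\ge\langle U_S x,x\rangle$. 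Letting $T,S\to\infty$ and using the convergence of the previous paragraph gives $\langle u^{+}(v)x,x\rangle\ge\langle u^{-}(v)x,x\rangle$ for all $x\in v^\perp$, that is $V(v)\ge 0$, as required.
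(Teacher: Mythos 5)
The paper gives no proof of this lemma at all: it is quoted from \cite{Z} as a known result, so there is no internal argument to compare yours against. Your proof is correct and is the standard one for this standard fact. Realizing $u^{\pm}(v)$ as the Green limits of the Dirichlet Riccati data $\tilde U_T=A_{-T}'(0)$ and $U_S=A_S'(0)$, converting $\langle \tilde U_T x,x\rangle-\langle U_S x,x\rangle$ into the index form of the broken Jacobi field obtained by concatenating $A_{-T}(t)x$ and $A_S(t)x$, and invoking positive (semi)definiteness of the index form on fields vanishing at both endpoints of a segment without conjugate points is exactly how this inequality is usually established; the boundary terms and the corner contribution at $t=0$ are computed with the right signs. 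The one step you quote rather than prove --- existence of the Green limits and their identification with $\nabla^2 b_{-v}$ and $-\nabla^2 b_v$ --- is indeed classical (Green, Eschenburg) and is legitimate to cite here, since the definition of asymptotic harmonicity used in this paper already presupposes $C^2$ Busemann functions. It is also worth noting that your argument uses only the absence of conjugate points and never the hypothesis $\Delta b_v\equiv h$, which is consistent with the generality in which the cited source states the result.
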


\begin{Lem}[\cite{S1}]\label{lem:u+}
    If $(M,g)$ is an asymptotically harmonic manifold, then for every point $p\in M$, there exists $v_0 \in S_p M$ such that $u^{+}(v_0)$ and $u^{+}(-v_0)$ have the same eigenvalues. In matrix sense, for $v_0 \in S_p M$, there exists an orthogonal matrix $p(v_0 )$ such that $p(v_0)^{-1}u^{+}(v_0)p(v_0)=u^{+}(-v_0)$.
\end{Lem}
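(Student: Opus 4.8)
The plan is to reduce the statement to a single application of the Borsuk--Ulam theorem. The key structural observation is that each $u^{+}(v)$ is a \emph{self-adjoint} endomorphism of $v^{\perp}$, being the restriction of the Hessian $\nabla^2 b_{-v}$ of the Busemann function to $v^{\perp}$. Consequently two such operators have the same eigenvalues (with multiplicity) if and only if they are orthogonally conjugate, and this in turn is equivalent to their having the same characteristic polynomial. Note also that $u^{+}(-v)$ is an endomorphism of $(-v)^{\perp}=v^{\perp}$, so the comparison of $u^{+}(v)$ and $u^{+}(-v)$ takes place on one and the same inner product space, and the sought orthogonal matrix acts on $v^{\perp}$.

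First I would encode the spectrum of $u^{+}(v)$ by the coefficients of its characteristic polynomial. Identifying $S_pM$ with the round sphere $S^{n-1}\subset T_pM$, so that $v\mapsto -v$ is the antipodal map, I define $F\colon S^{n-1}\to\R^{n-1}$ by letting $F(v)$ be the tuple of elementary symmetric functions $\sigma_1(v),\dots,\sigma_{n-1}(v)$ of the eigenvalues of $u^{+}(v)$. These are polynomial expressions in the entries of $u^{+}(v)$, so by Lemma~\ref{cont} the map $F$ is continuous. Observe that the first coordinate $\sigma_1(v)=\tr u^{+}(v)=h$ is constant, so it contributes a trivially vanishing component to the difference below.

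Next I would set $G(v):=F(v)-F(-v)$, a continuous map $S^{n-1}\to\R^{n-1}$ that is odd, i.e.\ $G(-v)=-G(v)$. Since the domain sphere and the target have the same dimension $n-1$, the Borsuk--Ulam theorem guarantees a point $v_0\in S^{n-1}$ with $G(v_0)=0$, that is, $F(v_0)=F(-v_0)$. Thus $u^{+}(v_0)$ and $u^{+}(-v_0)$ share the same characteristic polynomial and hence, being self-adjoint, the same eigenvalues. Finally, two real symmetric operators on $v_0^{\perp}$ with identical spectra are orthogonally similar, which produces the orthogonal matrix $p(v_0)$ with $p(v_0)^{-1}u^{+}(v_0)p(v_0)=u^{+}(-v_0)$.

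The step requiring the most care is the topological one: one must check that the map to which Borsuk--Ulam is applied genuinely lands in $\R^{n-1}$ with matching sphere dimension, and that it is both continuous (via Lemma~\ref{cont} together with continuity of the symmetric functions of eigenvalues) and odd. Everything else---the self-adjointness of $u^{+}(v)$ and the passage from equal spectra to orthogonal conjugacy---is standard linear algebra. I expect no essential difficulty beyond making this packaging precise; in particular the constancy of the trace forces the first component of $G$ to vanish identically, so only the remaining $n-2$ components carry information, and even these are handled uniformly by the single Borsuk--Ulam application.
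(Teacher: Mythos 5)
Your argument is correct: $u^{+}(v)$ is self-adjoint on $v^{\perp}=(-v)^{\perp}$, the coefficient map $F$ is continuous by Lemma~\ref{cont}, and the odd map $G=F-F\circ(-\mathrm{id})$ on $S^{n-1}$ with values in $\R^{n-1}$ has a zero by Borsuk--Ulam, which yields equal characteristic polynomials and hence orthogonal conjugacy. The paper itself gives no proof (the lemma is quoted from \cite{S1}), but the argument there is exactly this Borsuk--Ulam reasoning, so your proposal matches the intended proof.
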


\begin{Thm}\label{thm:asymEin}
    If $(M,g)$ is an asymptotically harmonic manifold with minimal horospheres and Einstein, then $M$ is flat.
\begin{proof}
       Consider $V(v)=u^{+}(v)-u^{-}(v)$. Since $\tr V(v)= 2h = 0$ for all $v\in SM$, from Lemma \ref{lem:nonnegativeV} we obtain $V(v)\equiv 0$, which implies that $u^{+}(v) = u^{-}(v)$ for all $v\in SM$. Using Lemma \ref{lem:u+}, for every point $p\in M$, there exists $v_0\in S_p M$ such that
    \begin{equation*}
        p(v_0)^{-1}u^{+}(v_0)p(v_0) = u^{+}(-v_0)= u^{-}(-v_0)  = -u^{+}(v_0).
    \end{equation*}
    It follows that $u^{+}(v_0)$ and $-u^{+}(v_0)$ have the same eigenvalues. Then the corresponding eigenvalues vanish and so $u^{+}(v_0)=0$. Hence, the Riccati equation \eqref{Riccati} yields $\Ric(v_0,v_0)=0$. Since $M$ is Einstein, $M$ is Ricci-flat, which implies that $M$ is flat.
\end{proof}
\end{Thm}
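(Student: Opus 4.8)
The plan is to collapse the stable and unstable Riccati solutions into a single operator, then combine the resulting reversal symmetry with the Einstein normalization to produce one direction on which the unstable solution vanishes.

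First I would show $u^+\equiv u^-$. Writing $V(v)=u^+(v)-u^-(v)$ and using $\tr u^+(v)=h$, $\tr u^-(v)=-h$, we get $\tr V(v)=2h=0$ since the horospheres are minimal. By Lemma~\ref{lem:nonnegativeV} we have $V(v)\ge 0$, and a self-adjoint positive semidefinite operator of zero trace is zero; hence $V\equiv 0$, i.e. $u^+(v)=u^-(v)$ for all $v\in SM$. Next I would record the reversal identity $u^-(-v)=-u^+(v)$ (equivalently $u^+(-v)=-u^-(v)$), which is immediate from the definitions $u^+(v)x=\nabla_x\nabla b_{-v}$, $u^-(v)x=-\nabla_x\nabla b_v$ together with $b_{-(-v)}=b_v$. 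Combined with $u^+=u^-$ this gives the oddness relation $u^+(-v)=-u^+(v)$ for every $v$.

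Now I would fix a point $p\in M$ and invoke Lemma~\ref{lem:u+} to obtain $v_0\in S_pM$ and an orthogonal operator $P$ with $P^{-1}u^+(v_0)P=u^+(-v_0)$; by the oddness relation this says $P^{-1}u^+(v_0)P=-u^+(v_0)$, so $u^+(v_0)$ is orthogonally conjugate to its own negative and in particular isospectral to $-u^+(v_0)$. The step I expect to be the main obstacle is passing from here to $u^+(v_0)=0$: for a self-adjoint operator, being isospectral to its negative forces only that the spectrum be symmetric about the origin, and by itself does \emph{not} make the operator vanish (for instance $\operatorname{diag}(1,-1)$). To close this gap one needs a definiteness input. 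The route I would pursue is to exploit the degeneracy encoded in $u^+=u^-$: this makes $b_v+b_{-v}$ a function with parallel gradient which, being $\ge 0$ and vanishing along $\gamma_v$, is identically $0$, so the geodesic $\gamma_{v_0}$ carries a Riccati solution bounded in both time directions (it is simultaneously the stable and the unstable solution). I would then feed the Einstein normalization $\tr(u^+(v)^2)=-\Ric(v,v)=\mathrm{const}$ into this two-sided boundedness to rule out a nonzero symmetric spectrum, concluding $u^+(v_0)=0$. This rigidity is the crux of the argument.

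Finally, with $u^+(v_0)=0$ the Riccati equation~\eqref{Riccati} gives $\Ric(v_0,v_0)=-\tr(u^+(v_0)^2)=0$. Since $M$ is Einstein, $\Ric$ is a constant multiple of $g$, so $\Ric(v_0,v_0)=0$ forces $\Ric\equiv 0$; that is, $M$ is Ricci-flat. As already noted in the introduction via~\eqref{Riccati}, a Ricci-flat asymptotically harmonic manifold is flat, which completes the proof. Apart from the definiteness needed to kill $u^+(v_0)$, every step is routine manipulation of the Riccati equation together with the cited lemmas.
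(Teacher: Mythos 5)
Your proposal follows the paper's proof step for step up to the crux: you derive $V\equiv 0$ from $\tr V=2h=0$ and Lemma~\ref{lem:nonnegativeV}, use the reversal identity $u^{-}(-v)=-u^{+}(v)$ together with $u^{+}=u^{-}$ to get $u^{+}(-v)=-u^{+}(v)$, and invoke Lemma~\ref{lem:u+} to produce $v_0$ with $u^{+}(v_0)$ orthogonally conjugate to $-u^{+}(v_0)$; the final passage from $u^{+}(v_0)=0$ to Ricci-flatness and flatness is also identical. You are right that the delicate point is the implication ``isospectral to its own negative $\Rightarrow$ zero'': for a self-adjoint operator this only forces the spectrum to be symmetric about the origin, as $\operatorname{diag}(1,-1)$ shows (and in the traceless $2\times 2$ case relevant to dimension $3$ the symmetry is automatic and carries no information). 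The paper's own proof simply asserts at this point that ``the corresponding eigenvalues vanish,'' implicitly leaning on the precise form of the lemma in \cite{S1}, so you have correctly located the load-bearing step.

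However, your proposed repair does not close the gap. The Einstein normalization gives only the single scalar identity $\tr\big(u^{+}(v)^2\big)=-\Ric(v,v)=-\kappa$ for all $v$, and this is perfectly consistent with a nonzero operator whose spectrum is $\{\lambda,-\lambda,\dots\}$; nothing in ``two-sided boundedness of the Riccati solution along $\gamma_{v_0}$'' is shown to exclude this. (Without a sign condition on the curvature or on $u^{+}$, boundedness in both time directions does not force a Riccati solution with symmetric spectrum to vanish at a point.) The auxiliary claim that $b_v+b_{-v}$ has parallel gradient is also unsubstantiated: the identity $\nabla^2(b_v+b_{-v})=V$ holds only along $\gamma_v$, where the two Busemann gradients are antipodal, not on all of $M$, so $V\equiv 0$ does not immediately make $b_v+b_{-v}$ affine. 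As written, the crux step $u^{+}(v_0)=0$ remains unproven in your argument, so the proof is incomplete precisely where you predicted the difficulty would lie.
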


Now we consider the case when $M$ is {a} symmetric space.

\begin{Lem}[\cite{H}]\label{lem:sym}
    Let $(M,g)$ be an asymptotically harmonic manifold. Then $X\equiv 0$ if and only if $M$ is {a}  symmetric space.
\end{Lem}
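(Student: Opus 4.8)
The plan is to prove the two implications separately, treating the converse as the substantial one. For the direction ``$M$ symmetric $\Rightarrow X\equiv 0$'' I would exploit the geodesic symmetry. Fix $p\in M$ and $v\in S_pM$ with $\gamma_v(0)=p$, and let $s_p$ be the geodesic symmetry at $p$, which in a symmetric space is an isometry with $(ds_p)_p=-\Id$ and $s_p(\gamma_v(t))=\gamma_{-v}(t)$. Since $s_p$ is a distance-preserving involution fixing $p$, the definition of the Busemann function gives $b_v\circ s_p=b_{-v}$. Differentiating twice at $p$ and using that $s_p$ is an isometry with $(ds_p)_p=-\Id$ (so the two sign changes cancel in the Hessian), one finds that the Hessians of $b_{-v}$ and $b_v$ coincide at $p$. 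Restricting to $v^\perp$ and recalling $u^{+}(v)(x)=\nabla_x\nabla b_{-v}$ and $u^{-}(v)(x)=-\nabla_x\nabla b_v$, this reads exactly $u^{+}(v)=-u^{-}(v)$, i.e. $X(v)=0$; as $p$ and $v$ are arbitrary, $X\equiv 0$.

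For the converse ``$X\equiv 0\Rightarrow M$ symmetric'' I would first extract the analytic consequence along geodesics. From the relation $V'=XV+VX$ stated above, the hypothesis $X\equiv 0$ forces $V'\equiv 0$, so $V(t)=V(\varphi^t v)$ is parallel along every geodesic $\gamma_v$. Next, differentiating the Riccati equation \eqref{Riccati} and substituting $u^{\pm}=-X\pm\tfrac12 V$ yields $X'=X^2+\tfrac14 V^2+R$; imposing $X\equiv 0$ (hence $X'\equiv 0$) gives $R=-\tfrac14 V^2$ along $\gamma_v$. Since $V$ is parallel, so is $V^2$, and therefore the Jacobi operator $R(t)=R(\,\cdot\,,\gamma_v'(t))\gamma_v'(t)$ is parallel along every geodesic. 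Equivalently, $(\nabla_X R)(Y,X)X=0$ for all $X\in SM$ and all $Y$.

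It then remains to upgrade ``the Jacobi operator is parallel along every geodesic'' to $\nabla R=0$. I would do this by fully polarizing the cubic-in-$X$ identity $(\nabla_X R)(Y,X)X=0$, symmetrizing over the three slots occupied by $X$ (the differentiation direction, the second curvature argument, and the vector acted upon), and then feeding the resulting symmetric relation into the second Bianchi identity $(\nabla_A R)(B,C)+(\nabla_B R)(C,A)+(\nabla_C R)(A,B)=0$ together with the pair symmetry and the two antisymmetries of $\nabla R$. The algebra should close to force every component of $\nabla R$ to vanish, so that $M$ is locally symmetric; being complete and simply connected, $M$ is then a globally symmetric space, completing the equivalence.

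The main obstacle is precisely this last step. Polarization produces a symmetric combination of six terms, and one must track the curvature symmetries carefully so that the Bianchi identity eliminates the antisymmetric remainder rather than collapsing into a tautology (as happens under naive substitutions such as $Z=X$). To control the signs I would organize the computation around the scalar $\langle(\nabla_A R)(B,C)D,E\rangle$ and its five index symmetries, verifying that the symmetrized identity and the cyclic Bianchi relation together span enough of the space of $5$-tensors with these symmetries to annihilate $\nabla R$. An alternative, if the direct combinatorics proves unwieldy, is to invoke the Cartan--Ambrose--Hicks criterion: the condition that $R(t)$ be parallel along all geodesics is exactly the hypothesis ensuring that the map $\exp_p\circ(-\Id)\circ\exp_p^{-1}$ preserves parallel-transported curvature, hence is an isometry, which again identifies $M$ as symmetric.
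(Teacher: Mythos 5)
The paper does not prove this lemma at all: it is imported verbatim from the reference [H] (Heber--Knieper--Shah), so there is no in-paper argument to compare against, and your proposal is a genuine reconstruction. Your forward direction is correct: $b_v\circ s_p=b_{-v}$ together with $(ds_p)_p=-\Id$ gives $\nabla^2 b_{-v}|_p=\nabla^2 b_v|_p$, hence $u^{+}(v)=-u^{-}(v)$ and $X(v)=0$. Your converse also follows the standard route: $X\equiv 0$ forces $V'\equiv 0$ and $R=-\tfrac14V^2$, so the Jacobi operator is parallel along every geodesic, and the whole issue is the classical fact that this implies $\nabla R=0$. You are right to flag that step as the crux, and it does close by exactly the mechanism you describe. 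Concretely, writing $T(A;B,C,D,E)=\langle(\nabla_AR)(B,C)D,E\rangle$, the polarized hypothesis applied to the multisets $\{A,A,C\}$, $\{A,C,Y\}$, $\{A,C,Z\}$ in the symmetrized slots, combined with the first and second Bianchi identities, forces $T(A;A,C,Y,Z)=0$ and $T(C;A,Y,A,Z)=0$ for all vectors; the first says $T$ is antisymmetric in the derivative slot and the first curvature slot, the second that it is antisymmetric in the first and third curvature slots, and then the second (resp.\ first) Bianchi cyclic sum degenerates to $3T=0$. So the combinatorics you were worried about does span enough to annihilate $\nabla R$, and local symmetry plus completeness and simple connectedness gives a globally symmetric space.

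One caution: your proposed fallback via Cartan--Ambrose--Hicks is not actually an escape route. Cartan's criterion for $\exp_p\circ(-\Id)\circ\exp_p^{-1}$ to be an isometry requires that parallel transport along radial geodesics preserve the \emph{full} curvature tensor, not merely the Jacobi operator $R(\cdot,\dot\gamma)\dot\gamma$; passing from the latter to the former is precisely the polarization lemma you were trying to avoid. So the first route is the one to carry out, and it works.
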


\begin{Thm}
        If $(M,g)$ is an asymptotically harmonic with minimal horospheres {which is also a} symmetric space, then $M$ is flat.

    \begin{proof}
      Let us assume that $h=0$. Using $\tr V(v)= 2h = 0$ for all $v\in SM$ and Lemma \ref{lem:nonnegativeV}, we obtain $V(v)\equiv 0$, which means that
      $u^{+}(v)=u^{-}(v)$ for all $v\in SM$. Then we get $X(v)=-\frac{1}{2}(u^{+}(v)+u^{-}(v))=-u^{+}(v)$. Since $M$ is symmetric, from Lemma \ref{lem:sym} we have $u^{+}(v)=0$ for all $v\in SM$. Hence the Riccati equation \eqref{Riccati} gives us $K\equiv 0$.
    \end{proof}
\end{Thm}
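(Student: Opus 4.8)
The plan is to chain together the positivity of $V$ (Lemma~\ref{lem:nonnegativeV}) with the symmetric-space characterization $X\equiv 0$ (Lemma~\ref{lem:sym}) so as to force $u^{+}\equiv 0$, after which flatness is immediate from the Riccati equation~\eqref{Riccati}.

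First I would exploit the minimal-horosphere hypothesis $h=0$. Using $\tr u^{+}(v)=h$ and $\tr u^{-}(v)=-h$, one computes $\tr V(v)=\tr u^{+}(v)-\tr u^{-}(v)=2h=0$ for every $v\in SM$. By Lemma~\ref{lem:nonnegativeV} the operator $V(v)$ is positive semidefinite, and the key elementary fact is that a positive semidefinite endomorphism whose trace vanishes must be the zero operator (all its eigenvalues are nonnegative and sum to zero, so each vanishes). Hence $V(v)\equiv 0$, which is precisely the identity $u^{+}(v)=u^{-}(v)$ on all of $SM$.

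Next I would feed this identity into the definition $X(v)=-\tfrac{1}{2}\big(u^{+}(v)+u^{-}(v)\big)$. Since $u^{+}(v)=u^{-}(v)$, the two summands coincide and we get $X(v)=-u^{+}(v)$. Now the symmetry hypothesis enters: by Lemma~\ref{lem:sym}, $M$ being a symmetric space is equivalent to $X\equiv 0$, so we conclude $u^{+}(v)=-X(v)\equiv 0$ for all $v\in SM$.

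Finally, substituting $u^{+}\equiv 0$ into the Riccati equation~\eqref{Riccati} makes both $(u^{+})'$ and $(u^{+})^2$ vanish, leaving $R\equiv 0$; thus the curvature tensor vanishes and $M$ is flat. I do not expect any serious obstacle here, as the argument is a direct composition of the two cited lemmas with the definitions of $V$ and $X$; the only point needing a moment's care is the linear-algebra observation that a trace-zero positive semidefinite operator is identically zero.
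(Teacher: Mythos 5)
Your proposal is correct and follows essentially the same argument as the paper: $\tr V=2h=0$ together with Lemma~\ref{lem:nonnegativeV} forces $V\equiv 0$, hence $u^{+}=u^{-}$ and $X=-u^{+}$, and then Lemma~\ref{lem:sym} kills $u^{+}$ so that the Riccati equation gives flatness. The only difference is that you spell out the linear-algebra step (a trace-zero positive semidefinite operator vanishes), which the paper leaves implicit.
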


\vspace{0.2in}

{On the other hand, it is known that the following relation holds for a Riemannian manifold $M$:}
{
\begin{center}
    $M$ has nonpositive sectional curvature, then
    $M$ has no focal points which implies that $M$ has no conjugate points.
\end{center} }

For an asymptotically harmonic manifold, if we strengthen the condition
 of ``without conjugate points", then we obtain that an asymptotically harmonic manifold with minimal horospheres is flat as follows: If $u^{+}(v)$ is positive semi-definite and $\tr u^{+}(v)=0$, $u^{+}(v)=0$. Then the Riccati equation \eqref{Riccati} implies $K \equiv 0$.

\begin{Pro}
    Let $(M,g)$ be a complete, simply connected manifold with nonpositive sectional curvature {\rm(}i.e. Hadamard manifold{\rm)}. If $M$ is an asymptotically harmonic with minimal horospheres, then $M$ is flat.
    \begin{proof}
        As $M$ has nonpositive sectional curvature, using standard comparison geometry we get $u^{+}(v)$ is positive semi-definite. Then by the above argument, we have that $M$ is flat.
    \end{proof}
\end{Pro}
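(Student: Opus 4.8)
The plan is to follow verbatim the three-line argument sketched just before the statement, supplying the one ingredient that genuinely needs comparison geometry: that nonpositive curvature forces the unstable Riccati solution to be positive semi-definite. Concretely, I would first show that for every $v\in SM$ the operator $u^{+}(v)$ is positive semi-definite, then use minimality of horospheres to kill its trace, conclude $u^{+}(v)\equiv 0$, and finally read off $R\equiv 0$ directly from the Riccati equation~\eqref{Riccati}. This mirrors the reasoning already used in the proof of Theorem~\ref{thm:mainthm}, so it should fit the paper's framework with no new machinery.

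First I would establish positive semi-definiteness of $u^{+}(v)$. By definition $u^{+}(v)(x)=\nabla_x\nabla b_{-v}$, so $u^{+}(v)$ is precisely the Hessian operator of the Busemann function $b_{-v}$. On a Hadamard manifold Busemann functions are convex---a classical consequence of the convexity of $x\mapsto d(x,p)$ along geodesics when $K\le 0$, since $b_{-v}$ is (up to additive constants) a monotone limit of such distance functions---and hence their Hessians are positive semi-definite, giving $u^{+}(v)\ge 0$ for all $v\in SM$. Equivalently, one may argue at the level of the Riccati equation: since $K\le 0$ we have $R(t)\le 0$ as an endomorphism of $\gamma_v'(t)^\perp$, and comparing the unstable solution with the flat model (where it is identically $0$) yields $u^{+}(v)\ge 0$ by standard comparison geometry.

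Next, minimal horospheres mean $h=0$, so $\tr u^{+}(v)=\Delta b_{-v}=0$. A symmetric positive semi-definite operator has nonnegative eigenvalues, and if in addition their sum vanishes then every eigenvalue is $0$; hence $u^{+}(v)=0$ for each $v\in SM$. Since this holds along every geodesic, the orbit map $t\mapsto u^{+}(\varphi^t v)$ is identically zero, so $(u^{+})'\equiv 0$ as well. Feeding $u^{+}\equiv 0$ and $(u^{+})'\equiv 0$ into the Riccati equation~\eqref{Riccati} gives $R(t)=0$, i.e.\ $R(x,v)v=0$ for all $v\in SM$ and $x\in v^\perp$; therefore $K\equiv 0$ and $M$ is flat.

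I do not expect a serious obstacle: once $u^{+}(v)\ge 0$ is in hand the conclusion is immediate and purely algebraic. The only point requiring care is that justification of $u^{+}(v)\ge 0$, where one must watch the sign conventions---the convexity of $b_{-v}$, or equivalently $R\le 0$ together with the correct choice of the unstable rather than the stable solution---to be certain the inequality points the right way.
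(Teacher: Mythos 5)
Your proposal is correct and follows exactly the paper's route: establish $u^{+}(v)\ge 0$ from nonpositive curvature via standard comparison geometry (equivalently, convexity of Busemann functions on a Hadamard manifold), combine with $\tr u^{+}(v)=h=0$ to get $u^{+}\equiv 0$, and read off $R\equiv 0$ from the Riccati equation. You merely spell out the comparison-geometry step that the paper leaves implicit.
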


\begin{Pro}[\cite{Z}]
     Let $(M,g)$ be a complete, simply connected Riemannian
manifold without focal points. If $M$ is an asymptotically harmonic with minimal horospheres, then $M$ is flat. Moreover, asymptotically harmonic manifolds without focal points which is a Riemannian product must be flat.
\end{Pro}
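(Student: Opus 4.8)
The plan is to prove the two assertions separately, reducing the first to the positive semi-definiteness of the unstable Riccati solution, and reducing the \emph{moreover} part to the first assertion by forcing $h=0$ through the product structure.

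First I would treat the main assertion. The essential input is that a complete, simply connected manifold without focal points satisfies $u^{+}(v)\ge 0$ (positive semi-definite) for every $v\in SM$; this is the analogue, one rung weaker than nonpositive curvature, of the comparison estimate used in the Hadamard case preceding this proposition. I would derive it from the defining monotonicity property of the absence of focal points: for every geodesic and every nontrivial Jacobi field $J$ with $J(0)=0$ the function $t\mapsto |J(t)|^{2}$ is nondecreasing for $t\ge 0$, which translates, via the identification of $u^{-}(v)$ with the endomorphism sending a stable Jacobi field to its covariant derivative, into $u^{-}(v)\le 0$, and hence $u^{+}(v)\ge 0$ after the substitution $v\mapsto -v$, $t\mapsto -t$. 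Granting this, minimality of horospheres gives $\tr u^{+}(v)=h=0$, and a positive semi-definite endomorphism of zero trace vanishes, so $u^{+}(v)\equiv 0$. The Riccati equation \eqref{Riccati} then yields $R(x,v)v=-(u^{+})'(x)-(u^{+})^{2}(x)=0$ for all $x\in v^{\perp}$ and all $v$, i.e. $K\equiv 0$ and $M$ is flat.

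For the \emph{moreover} part, write $M=M_{1}\times M_{2}$ as a nontrivial Riemannian product and show first that asymptotic harmonicity forces $h=0$. For $v_{1}\in S M_{1}$, $w_{2}\in S M_{2}$ and $\theta\in[0,\pi/2]$, consider the unit vector $v=(\cos\theta\,v_{1},\sin\theta\,w_{2})$ with geodesic $\gamma_{v}(t)=(\sigma_{1}(t\cos\theta),\sigma_{2}(t\sin\theta))$. Using $d^{2}=d_{1}^{2}+d_{2}^{2}$ on the product together with the factorwise asymptotics $d_{i}(x_{i},\sigma_{i}(s))=s+b^{M_{i}}(x_{i})+o(1)$, a short expansion of the square root gives the splitting of the Busemann function
\begin{equation*}
b_{v}(x_{1},x_{2})=\cos\theta\, b^{M_{1}}_{v_{1}}(x_{1})+\sin\theta\, b^{M_{2}}_{w_{2}}(x_{2}).
\end{equation*}
Taking the Laplacian and using $\theta=0$ and $\theta=\pi/2$ to identify each factor as asymptotically harmonic with the same constant $h$, I obtain $\Delta b_{v}=h(\cos\theta+\sin\theta)$. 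Asymptotic harmonicity of $M$ requires this to be the constant $h$ for every $\theta$; evaluating at $\theta=\pi/4$ gives $h\sqrt{2}=h$, hence $h=0$. With $h=0$ and $M$ without focal points, the first part applies verbatim and $M$ is flat.

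The hard part will be the two comparison/analytic inputs rather than the algebra. The first is the rigorous passage from ``no focal points'' to $u^{+}\ge 0$: I must phrase the monotonicity of Jacobi fields in terms of the limits of solutions of \eqref{Riccati} defining $u^{\pm}$ and check the signs against the conventions $u^{+}(v)x=\nabla_{x}\nabla b_{-v}$, $u^{-}(v)x=-\nabla_{x}\nabla b_{v}$. The second is justifying the Busemann splitting for diagonal rays: the expansion above is formal, and I would need uniform (locally in $x$) control of the $o(1)$ error terms, together with the fact that for a product the factors inherit the absence of focal points, so that each $b^{M_{i}}$ is smooth and each factor is genuinely asymptotically harmonic. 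Once these are in place the conclusion $h=0$, and hence flatness, is immediate.
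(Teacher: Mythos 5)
The paper does not prove this proposition; it is quoted from Zimmer's paper \cite{Z} without proof, so the only available comparison is with the mechanism the paper itself sketches just before the Hadamard-manifold proposition, namely that $u^{+}(v)\ge 0$ together with $\tr u^{+}(v)=h=0$ forces $u^{+}\equiv 0$ and hence, via the Riccati equation \eqref{Riccati}, $K\equiv 0$. Your first half is exactly that argument, with the comparison-geometry step upgraded to the O'Sullivan--Eschenburg fact that absence of focal points already yields $u^{+}(v)\ge 0$ (as the limit of the second fundamental forms $u_r=A_r'A_r^{-1}$ of large spheres, each nonnegative because $t\mapsto |J(t)|^2$ is nondecreasing for Jacobi fields vanishing at $\gamma(-r)$); your second half, deriving $h\sqrt{2}=h$ from the splitting $b_v=\cos\theta\,b^{M_1}_{v_1}+\sin\theta\,b^{M_2}_{w_2}$ along diagonal geodesics, is the standard (and Zimmer's) route. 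Both halves are correct, and the points you flag as needing care --- the sign conventions relating $u^{\pm}$ to stable/unstable Jacobi fields, the uniform control of the $o(1)$ terms in the Busemann expansion, and the fact that the factors inherit the no-focal-points property as totally geodesic submanifolds --- are indeed the only places where detail must be supplied; none of them hides an obstruction.
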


Besides, when $M$ has no conjugate points, in \cite{RS2} the authors obtained the following.
\begin{Pro}[\cite{RS2}]
    Let $(M,g)$ be a complete, simply connected Riemannian
manifold without conjugate points.  If $M$ is an asymptotically harmonic with minimal horospheres such that
{the determinant of the second fundamental form of geodesic spheres is a radial function}, then $M$ is flat.
\end{Pro}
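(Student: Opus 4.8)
The plan is to reduce the assertion to the Einstein case, which is already settled by Theorem~\ref{thm:asymEin}. Since the hypothesis constrains the determinant of the second fundamental form of every geodesic sphere to depend only on the radius, the natural strategy is to examine this determinant infinitesimally near the center of a sphere, where it is governed by curvature; I expect this to force the Ricci tensor to be isotropic, hence $M$ to be Einstein, after which Theorem~\ref{thm:asymEin} finishes the argument.

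Concretely, I would fix $p\in M$ and a unit vector $v\in S_pM$, and consider the shape operator $A(r)$ of the geodesic sphere $S_r(p)$ along the radial geodesic $\gamma_v$. Writing $R(v):=R(\,\cdot\,,v)v\in\mathrm{End}(v^\perp)$ for the Jacobi operator and letting $J$ be the Jacobi tensor with $J(0)=0$ and $J'(0)=\Id$, the Jacobi equation $J''+R(v)J=0$ gives the short-time expansion $A(r)=J'(r)J(r)^{-1}=\tfrac1r\Id-\tfrac r3 R(v)+O(r^3)$, valid near $r=0$ since $\exp_p$ is there a local diffeomorphism, so no hypothesis on conjugate points is needed at this scale. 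Taking determinants and using $\tr R(v)=\Ric(v,v)$ together with $\det(\Id+\epsilon B)=1+\epsilon\tr B+O(\epsilon^2)$ yields
\begin{equation*}
\det A(r)=r^{-(n-1)}\Big(1-\tfrac{r^2}{3}\Ric(v,v)+O(r^4)\Big).
\end{equation*}
The radial hypothesis asserts that $\det A(r)$ is independent of the direction $v\in S_pM$ at each fixed $r$; comparing the coefficient of $r^{-(n-3)}$ then forces $\Ric(v,v)$ to be independent of $v$. By polarization $\Ric=c(p)\,g$ at each $p$, and since $p$ is arbitrary this means $M$ is Einstein (with $c$ constant by Schur's lemma when $n\ge 3$); Lemma~\ref{vec} moreover gives $c\le 0$, though this is not required. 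Being an asymptotically harmonic manifold with minimal horospheres that is Einstein, $M$ is flat by Theorem~\ref{thm:asymEin}.

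The step I expect to be the main obstacle is making the expansion of $\det A(r)$ fully rigorous and correctly identifying the $r^{-(n-3)}$ coefficient with a nonzero multiple of $\Ric(v,v)$; once this infinitesimal computation is in place, the reduction to the Einstein case is immediate. A secondary point worth pinning down is the precise meaning of \emph{radial}: it should be read as saying that, for each center $p$, the determinant is constant on each sphere $S_r(p)$, which is exactly what permits comparing different directions $v$ at a common radius and hence reading off the Ricci isotropy.
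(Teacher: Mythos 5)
Your argument is essentially correct, but note first that the paper itself offers no proof of this Proposition: it is quoted from \cite{RS2}, so there is no in-paper argument to compare against. On its own terms your reduction works: the expansion $rA(r)=\Id-\tfrac{r^2}{3}R(\,\cdot\,,v)v+O(r^3)$ holds (your error bound $O(r^3)$ on $A(r)$ itself should really be $O(r^2)$ because of the $\nabla_vR$ term, but this does not disturb the $r^2$-coefficient of $\det\bigl(rA(r)\bigr)$), that coefficient is indeed $-\tfrac13\Ric(v,v)$, the radial hypothesis at a fixed small $r$ then makes $\Ric(v,v)$ direction-independent, polarization gives pointwise Einstein (Schur, or simply the existence at each point of a Ricci-null direction from Lemma~\ref{lem:u+}, upgrades this to Ricci-flat), and Theorem~\ref{thm:asymEin} finishes. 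This is, however, a genuinely different route from the one in \cite{RS2}: that paper, as its title indicates, works at large radii, showing that the radial-determinant hypothesis forces convexity of geodesic spheres, whence the limiting shape operator $u^{+}(v)$ is positive semi-definite; combined with $\tr u^{+}(v)=h=0$ this kills $u^{+}$ and flatness follows from the Riccati equation \eqref{Riccati} --- exactly the mechanism of the Hadamard-manifold Proposition stated just before this one. Your proof uses only the infinitesimal ($r\to 0$) content of the radial hypothesis, which is appealingly local and elementary, but at the price of importing Theorem~\ref{thm:asymEin} and hence Lemmas~\ref{lem:nonnegativeV} and~\ref{lem:u+}; the large-radius argument instead stays entirely within the comparison-geometry framework of $u^{\pm}$. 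Both are valid proofs of the stated result.
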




\medskip
At the end of the paper, we state the following conjecture which is analog of Lichnerowicz's conjecture
for asymptotically harmonic manifolds. This conjecture has been partially resolved to date. Including Theorem~\ref{class}, we refer to \cite{CS, He, H, KP, S, Z}.

\begin{Con*}
    Let $(M,g)$ be a complete, simply connected Riemannian
manifold without conjugate points. If $M$ is an asymptotically harmonic manifold, then $M$ is either flat or rank-one symmetric space of noncompact type.
\end{Con*}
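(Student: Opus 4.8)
The plan is to argue separately according to whether $h=0$ or $h>0$, in each case attempting to pin down the unstable Riccati operator $u^{+}(v)$ completely from the trace constraint $\tr u^{+}(v)=h$, since by the Riccati equation~\eqref{Riccati} knowledge of $u^{+}$ along every geodesic determines the curvature tensor $R$, and hence the isometry type of $M$. The cleanest input is the splitting $u^{\pm}=-X\pm\tfrac12 V$ with $V=u^{+}-u^{-}\ge 0$ (Lemma~\ref{lem:nonnegativeV}) and $X=-\tfrac12(u^{+}+u^{-})$. When $h=0$ one has $\tr V(v)=2h=0$, so $V\equiv 0$, $u^{+}=u^{-}$, and $X=-u^{+}$; the conjecture then predicts flatness, and it suffices to show $X\equiv 0$, i.e.\ that $M$ is symmetric (Lemma~\ref{lem:sym}), after which $u^{+}\equiv 0$ and $R\equiv 0$. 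Theorem~\ref{thm:asymEin} and the companion results of this section already establish this under the extra hypotheses of being Einstein, symmetric, Hadamard, or free of focal points, so in the minimal-horosphere case the task reduces to removing these hypotheses.

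When $h>0$ the target is a rank-one symmetric space of noncompact type, all of which are symmetric, so the natural route is again to prove $X\equiv 0$ and then to identify the resulting model. Here Lemma~\ref{lem:u+} is the main lever: at each $p$ there is $v_0$ with $u^{+}(v_0)$ and $u^{+}(-v_0)$ orthogonally conjugate, and I would try to propagate this fibrewise symmetry of the eigenvalue data of $u^{+}$ across $SM$ and combine it with the continuity of $v\mapsto u^{\pm}(v)$ (Lemma~\ref{cont}) to force $u^{+}(v)+u^{-}(v)=0$, hence $X\equiv 0$. Once $M$ is known to be symmetric, one invokes Heber's classification~\cite{He} of asymptotically harmonic homogeneous spaces to read off the rank-one hyperbolic models; the remaining difficulty is then to deduce homogeneity, for which I would exploit the boundary at infinity together with the constant mean curvature $h$ of all horospheres to build a transitive group of asymptotic isometries, as in the $3$-dimensional rigidity behind Theorem~\ref{class}.

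The step I expect to be the genuine obstacle is forcing $X\equiv 0$, and it is here that the Damek--Ricci spaces~\cite{DR} intervene: these are noncompact, simply connected, harmonic --- hence asymptotically harmonic --- manifolds without conjugate points that are nonsymmetric, so for them $X\not\equiv 0$. Consequently no argument can hope to prove $X\equiv 0$ from asymptotic harmonicity and the absence of conjugate points alone; this is precisely the phenomenon that defeated the original Lichnerowicz conjecture in the harmonic category. The decisive content of any proof must therefore be a rigidity theorem that, from asymptotic harmonicity together with the no-conjugate-points hypothesis, forces $M$ either to be symmetric or to belong to the Damek--Ricci list, so that the conclusion as presently worded is best read as allowing Damek--Ricci spaces as well. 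Establishing such rigidity in full generality, without the auxiliary assumptions (bounded curvature, homogeneity, nonpositive curvature, or absence of focal points) used in the known partial results and in Theorem~\ref{class}, remains the crux and is exactly what keeps the conjecture open.
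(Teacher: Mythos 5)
There is no proof in the paper to compare your attempt against: the statement is the starred Conjecture at the end of the paper, which the authors explicitly leave open (``this conjecture has been partially resolved to date''), supporting it only with the partial results collected in Section 3 and with Theorem \ref{class}. You correctly recognized this rather than manufacturing an argument, and your scaffolding --- splitting into $h=0$ and $h>0$, using $V=u^{+}-u^{-}\ge 0$ (Lemma \ref{lem:nonnegativeV}) and $X=-\frac{1}{2}(u^{+}+u^{-})$, reducing to $X\equiv 0$ via Lemma \ref{lem:sym}, and invoking Lemma \ref{lem:u+} and Heber's classification \cite{He} --- is precisely the toolkit the paper itself deploys in its partial results (Theorem \ref{thm:asymEin} for the Einstein case, the symmetric case, the Hadamard case, the no-focal-points case), each of which needs exactly the auxiliary hypothesis you list.

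Your Damek--Ricci observation is the substantive point, and it is correct and worth making precise: by Heber's theorem as quoted in the paper's own introduction, nonsymmetric Damek--Ricci spaces \cite{DR} are asymptotically harmonic in the paper's sense (which builds in completeness, simple connectivity, and absence of conjugate points; cf.\ also \cite{RS1}), yet they are neither flat nor rank-one symmetric of noncompact type. So the conjecture as literally worded conflicts in dimension $\ge 7$ with results the paper itself cites, and --- as you argue via Lemma \ref{lem:sym}, since $X\not\equiv 0$ on a nonsymmetric space --- no strategy can force $X\equiv 0$ from asymptotic harmonicity and absence of conjugate points alone. The intended reading, as the ``analog of Lichnerowicz's conjecture,'' must therefore admit nonsymmetric Damek--Ricci spaces in the conclusion, exactly as Heber's homogeneous classification does; with that amendment your assessment stands: the conjecture is genuinely open, and the crux you identify (a rigidity theorem without the auxiliary hypotheses of boundedness, homogeneity, nonpositive curvature, no focal points, or $\dim M=3$) is not supplied anywhere in the paper.
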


\section*{Acknowledgements}
This work was supported by the National Research Foundation of Korea (NRF) grant funded by the Korea government (MSIT) (NRF-2019R1A2C1083957). The authors thank to Paul-Andi Nagy for the useful discussion on orthogonal coordinates.

\end{document}